\theoremstyle{plain}
\newtheorem{theorem}{Theorem}[section]
\newtheorem{lemma}[theorem]{Lemma}
\newtheorem{proposition}[theorem]{Proposition}
\newtheorem{definition}[theorem]{Definition}
\theoremstyle{definition}
\newtheorem{remark}[theorem]{Remark}
\newtheorem{conjecture}[theorem]{Conjecture}
\numberwithin{equation}{section}
\newcommand{\hs}{{\mathcal  H}}
\newcommand{\GG}{{\mathcal G}}
\newcommand{\Scal}{{\mathcal S}}
\newcommand{\R}{{\mathbb R}}
\newcommand{\N}{{\mathbb N}}
\newcommand{\Ha}{\hs}
\DeclareMathOperator{\tr}{tr}
\DeclareMathOperator{\cof}{cof}
\DeclareMathOperator{\spt}{spt}
\DeclareMathOperator{\Lip}{Lip}
\DeclareMathOperator{\Int}{int}
\newcommand{\te}{\mathfrak t}
\newcommand{\me}{\mathfrak m}
\newcommand{\K}{{\mathcal K}}
\newcommand{\e}{\varepsilon}
\newcommand{\eps}{\varepsilon}
\newcommand{\F}{\mathcal F}
\newcommand{\Gfun}{\mathcal G} % G functional
\newcommand{\evla}{\lambda_1} % first eigenvalue
\newcommand{\evmu}{\lambda_2} % second eigenvalue
\newcommand{\sphere}{\mathbb S}
    \let\TeXchi\chi
\newbox\chibox
\chibox \hbox{\raise\dp0 \box 0 }
\def\chi{\copy\chibox}
\title
[Mesoscale model for bilayer membranes]
{Variational analysis of a mesoscale model for bilayer membranes}
\author[L. Lussardi]
{Luca Lussardi}
\address[L.\,Lussardi]{Dipartimento di Matematica e Fisica ``N.\,Tartaglia'', Universit\`a Cattolica del Sacro Cuore, via dei Musei 41, I-25121 Brescia, Italy}
\email[]{l.lussardi@dmf.unicatt.it}
\urladdr{http://www.dmf.unicatt.it/~lussardi/}
\author[M. A. Peletier]
{Mark A. Peletier}
\address[M.\,A.\,Peletier]{Department of Mathematics and Computer Science and Institute for Complex Molecular Systems (ICMS), Eindhoven University of Technology, P.O. Box 513, 5600 MB Eindhoven, the Netherlands}
\email[]{m.a.peletier@tue.nl}
\urladdr{http://www.win.tue.nl/~mpeletie}
\author[M.\,R\"oger]
{Matthias R\"oger}
\address[M.\,R\"oger]{Fakult\"at f\"ur Mathematik, Technische Universit\"at Dortmund, Vogelpothsweg 87, D-44227 Dortmund, Germany}
\email[]{matthias.roeger@math.tu-dortmund.de}
\urladdr{http://www.mathematik.tu-dortmund.de/lsxi/roeger/}
\begin{document}

\maketitle
   
\vskip .2truecm
\begin{abstract}
We present an asymptotic analysis of a mesoscale energy for bilayer membranes that has been introduced and analyzed in two space dimensions by the second and third author ({\em Arch. Ration. Mech. Anal.} 193, 2009). The energy is both non-local and non-convex. It combines a surface area and a Monge--Kantorovich-distance term, leading to a competition between preferences for maximally concentrated and maximally dispersed configurations. Here we extend key results of our previous analysis to the three dimensional case. First we prove a general lower estimate and formally identify  a curvature energy in the zero-thickness limit. Secondly we construct a recovery sequence and prove a matching upper-bound estimate.% Our results show that the model explains the fundamental setup and stability of amphiphilic membranes. 

\small{%\dots
\vskip .3truecm
\noindent Keywor{\rm d}s: Lipid bilayers,
  curvature functionals, Monge-Kantorovich distance,
	Gamma-convergence, Hutchinson varifolds. 
\vskip.1truecm
\noindent 2010 Mathematics Subject Classification: 49J45, 49Q20, 74K15, 92C05.}
\end{abstract}
 
\section{Introduction}
Biomembranes are remarkable structures with both fluid-like and solid-like properties. The main constituents are amphiphilic lipids, which have a `head' part that attracts water and a   `tail' part that repels it. Because of these properties such lipids organize themselves in micelle and bilayer structures, where the head parts shield the lipid tails from the contact with water. Without any covalent bonds the resulting structures resist stretching and bending but still allow the lipids to freely move in the in-plane direction.

In \cite{PeRoe09} a meso-scale model was introduced in the form of an energy for idealized and rescaled head and tail densities. This model originates from a micro-scale description in which heads and tails are treated as separate particles. The energy has two contributions: one penalizes the proximity of tail to polar (head or water) particles, and the second implements the head-tail connection as an energetic penalization.  A formal upscaling procedure leads to the following meso-scale model (for the precise definition see Section \ref{sec:setting}).

Configurations of head and tail particles are given by two rescaled density functions
\begin{align}
	u_\eps,v_\eps\,:\, \R^n\,\to\,\{0,\eps^{-1}\}\text{ with } u_\eps v_\eps\,=\, 0\text{ a.e. in }\R^n,\quad \int u_\eps\,=\,\int v_\eps \,=\, M_T, \label{eq:def-uv}
\end{align} 
where $\eps>0$ is a small parameter and $M_T>0$ is an $\eps$-independent constant characterizing a total-mass constraint. To such configurations an energy is assigned by
\begin{align} 
	\F_\eps(u_\eps,v_\eps) \,:=\, \eps\int |\nabla u_\eps| + \frac 1\eps d_1(u_\eps,v_\eps). \label{eq:def-mesofun}
\end{align}
Here $u_\eps$ corresponds to the tail density and the first term in the energy measures the boundary size of its support. This contribution arises from the interaction energy between tails and polar particles. The second term in the energy is given by the Monge-Kantorovich distance between $u_\eps$ and $v_\eps$ (see Section \ref{sec:MK} below) and is a remnant of the implicit implementation of a head-tail connection. %The choice of the $d_1$ distance is in view of the application to amphiphilic membranes not necessarily the best choice but considerably simplifies the mathematical analysis.

In~\cite{PeRoe09} two of us studied the two-dimensional case and showed that this mesoscale model has some remarkable properties. First, no structure is a-priori imposed, and quite different configurations can be compared, from a concentration of all $u_\eps$ mass in a single ball (this minimizes the first part of the energy) to arbitrarily dispersed density distributions (leading to arbitrarily small values for the distance term). It is shown in \cite{PeRoe09} that among all structures the energy prefers bilayer structures, and in this sense the model provides an explanation of the fundamental setup and stability of amphiphilic membranes. Secondly, the mathematical analysis of the model confirms that the model in fact shows the key properties of biomembranes, namely a preference for uniformly thin structures without ends (resistance to stretching and rupture) and a resistance to bending of the structure. In \cite{PeRoe09} this behavior was made precise by a passage to a macro-scale model and a rigorous Gamma-convergence result in two space dimensions. In that limit the densities concentrate on families of $W^{2,2}$-curves and a generalized Euler elastica energy is obtained for moderate-energy structures.

Besides its application to biophysics the analysis of the energy \eqref{eq:def-mesofun} also leads  to an interesting and challenging mathematical problem. The energy is a combination of a local and a non-local contribution with competing preferences for maximally concentrated and maximally dispersed configurations. In the limit we see a transition from density functions that are absolutely continuous with respect to Lebesgue measure to concentration on lower-dimensional structures. In addition there is a change of order: whereas only first derivatives appear in the meso-scale energy, the macro-scale energy is defined in terms of local curvature of the bilayer. 

In recent years we have seen an increasing interest in meso-scale models for biomembranes, see for example~\cite{Merl13,Merl13a,PaTr13,SeFr14}. Related but different models that describe an interaction of a surface area and a competing nonlinear term arise in many situations and have been studied intensively over the last years, see for example the analysis in \cite{KnMu13} and the references therein.

The main contribution of the present paper is to extend key parts of the previous analysis to three space dimensions: a general lower bound estimate, a formal derivation of the macroscopic limit $\eps\to 0$, and a rigorous upper bound that matches the formal limit. For the lower estimate we show that the mass transport problem that defines the $d_1$ distance induces a unit-length vector field $\theta$ on the boundary $S$ of the support of  $u_\eps$, related to the direction of \emph{transport rays} through the corresponding point. We show that such rays are countably Lipschitz continuous,  and we use the rays to introduce a function $M$ on $S$ that describes the mass transported through that point and that is closely related to the thickness of the $u_\eps$ layer. The key estimate~\eqref{estfond} states that the difference of $M$ from a constant and the deviation of the ray direction $\theta$ from the outer normal direction of $S$ is penalized at order $\eps^{-2}$, whereas at zeroth order we find a quadratic form in the derivatives of $\theta$. Formally this estimate gives a corresponding compactness statement and a corresponding lim-inf estimate: The head and tail densities $v_\eps$ and $u_\eps$ concentrate on two-dimensional surfaces without boundary and for such structures the limit energy is given by a surface integral over a positive quadratic form in the principal curvatures of that surface. This formal lim-inf estimate is complemented by the construction of a recovery sequence and a matching upper bound. We give a  precise description of the results in
Section \ref{sec:setting}.

Whereas the main line of argument for the lower bound is analogous to the previous analysis in \cite{PeRoe09}, important differences occur at many places of the proof and require substantial changes. The construction of a recovery sequence is much more involved than the corresponding construction for curves. We do not give a rigorous compactness and lim-inf estimate in the limit of vanishing $\eps>0$. This indeed is the most challenging part of a complete Gamma convergence statement. For a simplified situation a corresponding lim-inf estimate and compactness result will be the subject of a forthcoming paper.

In the next section we introduce the mesoscale energy and state our main results. In Section~\ref{sec:MK} we review some results of the Monge--Kantorovich optimal mass transport problem and prove the lower-bound estimate. In the last section we give the construction of a recovery sequence and prove the upper-bound estimate.

%==============
% setting
%==============
\section{Setting of the problem and main results}\label{sec:setting}

Here we give a precise description of the mesoscale energy and a precise statement of our main results. 

For any $\e>0$ and $M_T>0$ set 
$$
	\K_\e:=\bigg\{(u,v)\in BV(\R^3;\{0,\e^{-1}\})\times  L^1(\R^3;\{0,\e^{-1}\}) : \int u =\int v=M_T,\,\textrm{$uv=0$ a.e.\,in $\R^3$}\bigg\}.
$$
We denote by $J_u$  the jump set of $u$ and define the functional $\mathcal F_\e \colon L^1(\R^3)\times L^1(\R^3) \to [0,+\infty]$ by
$$
\mathcal F_\e(u,v):=\left\{\begin{array}{ll}\displaystyle \mathcal H^2(J_u)+\frac{1}{\e}d_1(u,v) & \textrm{if $(u,v)\in\mathcal K_\e$}\\
\\
+\infty & \textrm{otherwise in $L^1(\R^3)\times L^1(\R^3)$,}
\end{array}\right.
$$
where $d_1$ denotes the Monge-Kantorovich transport distance, see Section \ref{sec:MK} below. Moreover, let $\mathcal G_\e \colon L^1(\R^3)\times L^1(\R^3) \to [0,+\infty]$ be given by
$$
\mathcal G_\e:=\frac{\mathcal F_\e -2M_T}{\e^2}.
$$
Theorem~\ref{main1} below will show that $\mathcal G_\e$ is non-negative and bounds various geometric aspects of $u$ and $v$. 

By well-known results on the optimal mass transport problem, see Section \ref{sec:MK} below, there exists a unique monotone optimal transport map $T:\R^3\to\R^3$ pushing $u$ forward to $v$, and a Kantorovich potential $\phi\in \Lip_1(\R^3)$, where $\Lip_1(\R^3)$ is the set of all Lipschitz continuous functions $\R^3\to \R$ with Lipschitz constant less or equal than 1. The mass transport is along \emph{rays} with direction $\theta=\nabla\phi$.  We show in Proposition~\ref{prop:just-para} below that $\theta$ is countably Lipschitz continuous and that $\theta$ induces a function $M$ on $J_u$ that describes the amount of mass sitting on the transport ray through the respective point. 

Our first main result is the following lower bound.

\begin{theorem}\label{main1}
Let $(u,v)\in \mathcal K_\e$ and assume that $J_u=\bigcup_{j=1}^L S_j$ is the finite union of pairwise disjoint, compact, orientable surfaces $S_j$ of class $C^1$ in $\R^3$. Then there exist nonnegative measurable functions $M_j \colon S_j \to \R$, $j=1,\dots,L$ such that
\begin{align}
	M_T \,=\, \sum_{j=1}^L \int_{S_j} M_j\,d\Ha^2, \label{eq:eq-mass}
\end{align}
such that $\theta$ and the inner unit normal field $\nu$ of $\spt(u)$ on $S_j$ satisfy $\theta \cdot \nu >0$ everywhere on $\{M_j>0\}$, and such that
\begin{equation}\label{estfond}
\mathcal G_\e(u,v) \geq \sum_{j=1}^L \frac{1}{\e^2}\int_{S_j}(M_j-1)^2\,d\mathcal  H^2+\frac{1}{\e^2}\int_{S_j}\bigg(\frac{1}{\theta \cdot \nu}-1\bigg)M_j^2\,d\mathcal  H^2+\int_{S_j}\frac{M_j^4}{(\theta \cdot \nu)^3}Q(D\theta)\,d\mathcal  H^2,
\end{equation}
where  the quadratic form $Q$ is defined for an arbitrary square matrix $A$ by 
\begin{align}
	Q(A):=\frac{1}{4}(\tr  A)^2-\frac{1}{6}\tr (\cof A) \label{eq:def-Q}
\end{align}
with $\cof A$ denoting the cofactor matrix of $A$.
\end{theorem}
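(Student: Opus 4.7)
The plan is to exploit the transport-ray structure of the Monge--Kantorovich problem to rewrite $d_1(u,v)$ as a surface integral over $J_u$ and then establish a pointwise lower bound on the integrand matching \eqref{estfond}.

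First I would use the Monge--Kantorovich theory from Section~\ref{sec:MK} to produce a 1-Lipschitz Kantorovich potential $\phi$ and set $\theta = \nabla\phi$. By Proposition~\ref{prop:just-para}, $\theta$ is countably Lipschitz, which legitimizes the ray parametrization
\[
\Phi_j(x, s) \,:=\, x + s\,\theta(x), \qquad x \in S_j, \ \ s \in [-L'_j(x), L_j(x)],
\]
where $L_j(x)$ and $L'_j(x)$ denote the ray lengths from $x$ inside $\spt u$ and $\spt v$, respectively. In a local orthonormal tangent frame $\{\tau_1, \tau_2\}$ with inner unit normal $\nu$, the Jacobian $J_j := |\det D\Phi_j|$ takes the polynomial form
\[
J_j(x, s) \,=\, (\theta \cdot \nu)\,\det(I + s B) \,=\, (\theta\cdot\nu)\bigl[1 + s\,\tr B + s^2\,\tr(\cof B) + s^3 \det B \bigr],
\]
for a $3\times 3$ matrix $B$ built from the tangent derivatives of $\theta$. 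Setting $M_j(x):=\e^{-1}\int_0^{L_j(x)} J_j(x,s)\,ds$ yields the mass-per-unit-area on the $u$-side of the ray through $x$, and mass conservation along each ray (i.e., the transport map restricts to a 1D push-forward on each ray) gives $\int_0^{L'_j(x)}J_j(x,-s)\,ds = \e\,M_j(x)$. Integrating over $j$ and $S_j$ produces \eqref{eq:eq-mass}.

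For the transport cost, Kantorovich duality gives $d_1 = \int \phi(u - v)$, and along rays $\phi(\Phi_j(x,s)) = \phi(x) + s$ since $|\theta|=1$ on rays. After change of variables, the $\phi(x)$-contributions cancel by mass conservation and we obtain
\[
d_1(u, v) \,=\, \sum_{j=1}^L \int_{S_j} \e^{-1}\biggl[\int_0^{L_j(x)} s\,J_j(x, s)\,ds + \int_0^{L'_j(x)} s\,J_j(x, -s)\,ds\biggr] d\mathcal{H}^2(x).
\]
Combining this with $\mathcal{H}^2(J_u) = \sum_j \mathcal{H}^2(S_j)$ and the identity $1 - 2M_j = (M_j-1)^2 - M_j^2$, the statement \eqref{estfond} reduces to the pointwise lower bound
\[
\e^{-2}\biggl[\int_0^{L_j} s\,J_j(x, s)\,ds + \int_0^{L'_j} s\,J_j(x, -s)\,ds\biggr] \,\geq\, \frac{M_j^2}{\theta\cdot\nu} + \e^2\,\frac{M_j^4}{(\theta\cdot\nu)^3}\,Q(D\theta).
\]
To prove this inequality, I would invert the mass constraint $\int_0^{L_j}J_j\,ds = \e M_j$ perturbatively, obtaining $L_j = \e M_j/(\theta\cdot\nu) + O(\e^2)$ and analogously for $L'_j$. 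Substituting back into the cost and expanding in powers of $\e$ gives the leading term $\e^2 M_j^2/(\theta\cdot\nu)$; at the next order, the algebraic bookkeeping combines $(\tr B)^2$ and $\tr(\cof B)$ with exactly the coefficients $\tfrac{1}{4}$ and $-\tfrac{1}{6}$ appearing in \eqref{eq:def-Q}. The identification $Q(D\theta) = Q(B)$ hinges on extending $\theta$ as a unit vector field along rays so that $D\theta\cdot\theta = 0$, which kills the third column of $D\theta$ in the adapted frame.

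The main obstacle I foresee is precisely this algebraic expansion: cleanly separating the leading, sub-leading and higher-order terms, tracking the factor $1/(\theta\cdot\nu)^3$ correctly through the perturbative inversion, and showing that the higher-order remainder has a favourable sign --- this is where the specific coefficients in $Q$ matter and where nonnegativity of $Q$ as a quadratic form enters. A secondary technical issue is the injectivity of $\Phi_j$ on $\{M_j > 0\}$ (secured by the stipulation $\theta\cdot\nu > 0$ there, which prevents caustics) together with the measurability properties needed to apply Fubini, both of which are supplied by Proposition~\ref{prop:just-para}.
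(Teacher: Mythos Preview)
Your overall architecture matches the paper's: parametrize $\spt u$ by rays emanating from $J_u$, define the mass-per-ray function $M_j$, compute the Jacobian of the ray map, and reduce everything to a pointwise inequality on each surface. The Jacobian is actually \emph{quadratic} in $s$ (since $|\theta|\equiv 1$ forces $\det D\theta=0$, as you note later), matching the paper's Proposition~\ref{prop:Jac}. However, two of your steps do not go through as written.

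\textbf{Local mass balance is not available.} You claim that the 1D push-forward along each ray yields $\int_0^{L'_j(x)}J_j(x,-s)\,ds=\e M_j(x)$, i.e.\ that the $v$-mass in the segment adjacent to $x\in S_j$ equals the $u$-mass on the other side. This fails whenever a ray crosses $\partial(\spt u)$ more than once: the monotone 1D transport matches the \emph{global} $u$- and $v$-masses along the whole ray, not segment-by-segment at each crossing. Consequently your duality computation of $d_1$ (where the $\phi(x)$-contributions are supposed to cancel pointwise) breaks down. The paper avoids this by working with the \emph{primal} expression $d_1=\int|x-T(x)|\,u\,dx$, parametrizing only $\spt u$ via the effective ray length $l^+$, and then invoking the one-dimensional monotone-transport structure to obtain the lower bound of Lemma~\ref{lem:eff-mass-balance},
\[
  |\Psi(p,\te_p(m))-T(\Psi(p,\te_p(m)))|\ \ge\ \te_p(m)-\te_p(m-M(p)).
\]
After integrating in $m$ and substituting $m\mapsto M-m$ in the second term this becomes exactly $\int_0^M\bigl(\te_p(m)-\te_p(-m)\bigr)\,dm$ --- the same quantity your duality computation would have produced \emph{had} local mass balance held. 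So Lemma~\ref{lem:eff-mass-balance} is precisely the missing ingredient that justifies the reduction you want.

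\textbf{The perturbative expansion does not yield an inequality.} Expanding $L_j$ and $L'_j$ in powers of $\e$ and substituting back gives the correct leading coefficients, but a Taylor expansion is an \emph{identity}, not an inequality; you still need to show the remainder is nonnegative, and nonnegativity of $Q$ alone is not enough for that. The paper's decisive observation (Proposition~\ref{prop-mass}) is that, in mass coordinates, the third derivative $\te_p'''$ is \emph{convex}. Then Taylor's formula with integral remainder gives
\[
  \te_p(m)-\te_p(-m)
  \;=\;\frac{2\e m}{\theta\cdot\nu}
     +\frac12\int_0^m(m-r)^2\bigl(\te_p'''(r)+\te_p'''(-r)\bigr)\,dr
  \;\ge\;\frac{2\e m}{\theta\cdot\nu}+\frac{4\e^3 Q(D\theta)}{(\theta\cdot\nu)^3}\,m^3,
\]
because convexity yields $\tfrac12(\te_p'''(r)+\te_p'''(-r))\ge\te_p'''(0)=\e^3(\theta\cdot\nu)^{-3}\bigl(3(\lambda_1+\lambda_2)^2-2\lambda_1\lambda_2\bigr)$. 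This convexity is verified by a direct computation of $\te_p^{(5)}$ and is what produces the exact constant $Q(D\theta)$ with no error term. Your perturbative scheme does not suggest this mechanism, and without it the ``main obstacle'' you correctly identify cannot be closed.
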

\begin{remark}
We will show in Lemma \ref{lem:evDtheta} below that $Q$ is a positive quadratic form in the `nontrivial' eigenvalues of $D\theta$, more precisely: for any $p\in S$ such that $D\theta(p)\in \R^{3\times 3}$ exists,
\begin{align*}
	Q(D\theta(p))&=\frac{1}{4}(\evla(p)+\evmu(p))^2-\frac{1}{6}\evla(p)\evmu(p) = \frac{1}{6}(\evla(p)+\evmu(p))^2+\frac{1}{12}(\evla(p)^2+\evmu(p)^2),
\end{align*}
where $\evla(p),\evmu(p)\in\R$ are the eigenvalues of the restriction of $D\theta(p)$ to $\theta(p)^\perp$.
\end{remark}

\begin{remark}
The conditions on the smoothness of $J_u$ may seem very stringent, as they are much stronger than the minimal regularity of jump sets of characteristic functions in $BV$. Nonetheless it is natural to make these assumptions in the context of a lower-bound estimate, since by approximation arguments we can replace $u$ and $v$ by approximations with smooth boundaries with negligible cost to $\mathcal G_\e$ (see~\cite{PeRoe09}). 
\end{remark}

In \cite{PeRoe09} a corresponding estimate for the two-dimensional case and the Gamma-convergence of $\GG_\eps$ to a generalized elastica functional has been shown. For three space dimensions a full Gamma-convergence result is much more difficult and here we only give a formal statement of the respective compactness and lim-inf estimate.% that we can expect from the lower-bound estimate \eqref{estfond}.
\begin{conjecture}\label{conj:liminf}
Given any sequence $(u_\eps,v_\eps)_{\eps>0}$ in $\mathcal K_\e$ as above with 
\begin{align*}
	\Gfun_\e(u_\eps,v_\eps)\,\leq\, \Lambda
\end{align*}
there exists a subsequence $\eps>0$ (not relabled), a finite union of (generalized) surfaces $(S_j)_{j=1,\dots,L}$ with associated even density function $\vartheta_j$ and associated (generalized) second fundamental form $\Pi$ such that  $S_j$ has empty boundary for all $j$ and such that
\begin{align}
	&u_\eps \,\to\, \sum_{j=1}^L\vartheta_j\Ha^2\lfloor S_j\quad\text{ as }\eps\to 0,\label{eq:compact}\\
	&\sum_{j=1}^L \int_{S_j}  \bigg(\frac{1}{4}H^2-\frac{1}{6}K\bigg)\,\vartheta_j d\mathcal H^2 \,\leq\, \liminf_{\e\to0}\mathcal G_{\e}(u_{\eps},v_{\eps}), \label{eq:liminf}
\end{align}
where $H,K$ are the trace and determinant of the generalized second fundamental form.
\end{conjecture}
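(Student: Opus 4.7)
\medskip
\noindent\emph{Proof proposal.} The plan is to combine Theorem~\ref{main1} with Hutchinson's theory of curvature varifolds. The three summands on the right-hand side of \eqref{estfond} supply, respectively, control on the deviation of $M$ from $1$, on the deviation of $\theta$ from the outer normal direction, and an $L^2$-type bound on $D\theta$; together these should yield varifold compactness and an $L^2$ bound on a second fundamental form, after which lower semicontinuity of the Hutchinson curvature functional closes the argument.

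First, from $\mathcal G_\e(u_\eps,v_\eps) \le \Lambda$ and \eqref{estfond} I would extract
\[
\sum_j \int_{J_{u_\eps}}(M_{j,\eps}-1)^2\,d\Ha^2 \le \Lambda \e^2, \qquad \sum_j \int_{J_{u_\eps}}\Bigl(\tfrac{1}{\theta_\eps\cdot\nu_\eps}-1\Bigr)M_{j,\eps}^2\,d\Ha^2 \le \Lambda \e^2,
\]
together with a uniform bound on $\sum_j \int_{J_{u_\eps}}Q(D\theta_\eps)\,d\Ha^2$ (the coefficient $M_{j,\eps}^4/(\theta_\eps\cdot\nu_\eps)^3$ being pointwise close to $1$ once the first two estimates are used). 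In particular $M_{j,\eps}\to 1$ and $\theta_\eps\cdot\nu_\eps\to 1$ strongly in $L^2$ on $J_{u_\eps}$; combined with the mass identity \eqref{eq:eq-mass} this yields $\Ha^2(J_{u_\eps})\to M_T$. The integer rectifiable $2$-varifolds $V_\eps$ associated with $J_{u_\eps}$ therefore have uniformly bounded mass, and Allard's compactness theorem produces a subsequential weak-$*$ limit $V=\sum_j \vartheta_j\,\Ha^2\lfloor S_j$ with integer densities $\vartheta_j$, yielding \eqref{eq:compact}.

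Next, since $|\theta_\eps-\nu_\eps|^2 = 2(1-\theta_\eps\cdot\nu_\eps) \to 0$ on $J_{u_\eps}$, the uniform $L^2$ bound on $Q(D\theta_\eps)$ transfers to an $L^2$ bound on the tangential derivative of $\nu_\eps$, that is, on the second fundamental form of $J_{u_\eps}$. Hutchinson's theory then upgrades each $V_\eps$ to a curvature varifold and identifies the limit $V$ as an integral curvature varifold with generalised second fundamental form $\Pi\in L^2$. Combining the pointwise identity $Q(D\nu)=\tfrac14 H^2-\tfrac16 K$ recorded in the Remark following Theorem~\ref{main1} with lower semicontinuity of the Hutchinson curvature energy under weak-$*$ varifold convergence then delivers \eqref{eq:liminf}, the first two terms of \eqref{estfond} being non-negative and thus simply discarded.

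The main obstacle I foresee is the absence of boundary in the limit surfaces $S_j$ together with the evenness of $\vartheta_j$: both features encode the bilayer structure and are not forced by the quadratic estimates above. One must show that locally $J_{u_\eps}$ consists of pairs of nearly parallel sheets at mutual distance $\sim\e$, matched by the optimal transport through the head density $v_\eps$ sitting between them, so that in the limit each pair collapses onto a single two-sided surface without boundary. A delicate analysis of the endpoints of the transport rays is needed to rule out stray endings that would create boundary in the limit. The $C^1$-regularity hypothesis on $J_u$ imposed in Theorem~\ref{main1} can be removed by a preliminary mollification step in the spirit of \cite{PeRoe09}. It is this boundary- and bilayer-matching analysis, rather than the curvature lower semicontinuity, that I expect to be the core difficulty, and which is presumably the subject of the forthcoming paper announced at the end of the introduction.
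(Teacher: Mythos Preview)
The paper does not prove this statement: it is explicitly stated as a \emph{Conjecture}, and the authors write that they ``do not further address here the compactness and lim-inf properties'' and that ``a corresponding lim-inf estimate and compactness result will be the subject of a forthcoming paper.'' So there is no proof in the paper to compare your attempt against; your closing paragraph already recognises this.

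Your outline is broadly the natural one and matches the heuristic the paper gives after~\eqref{estfond}. Two places in the sketch are more optimistic than the estimate~\eqref{estfond} actually warrants. First, the passage from an $L^2$ bound on $Q(D\theta_\eps)$ to an $L^2$ bound on the second fundamental form of $J_{u_\eps}$ is not a soft consequence of $|\theta_\eps-\nu_\eps|\to 0$ in $L^2$: $\theta_\eps$ is the ray direction, defined on $J_{u_\eps}$ and only countably Lipschitz there, and closeness of $\theta_\eps$ to $\nu_\eps$ in $L^2$ does not by itself control the difference $D\theta_\eps-D\nu_\eps$; some integration-by-parts or slicing argument is needed to transfer curvature information from the ray foliation to the interfaces. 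Second, the convergence \eqref{eq:compact} is a statement about $u_\eps$, not about the varifolds $V_\eps$ of $J_{u_\eps}$: to deduce that $u_\eps\leb^3$ concentrates on the limit surfaces one must show that the support of $u_\eps$ is a uniformly thin (width $\sim\eps$) neighbourhood of $J_{u_\eps}$, which again requires the bilayer-matching analysis you flag at the end. These are exactly the pieces the authors defer, so your identification of the core difficulty is correct, but the earlier steps are not as routine as your sketch suggests.
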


We expect that a suitable notion of generalized surfaces is that of Hutchinson varifolds with generalized fundamental form in $L^2$~\cite{Hutc86}. $S_j$ and $\vartheta_j$ describe the support and the $2$-dimensional density of that varifold and characterize limit points of the boundaries of the support of the~$u_\eps$. If these supports are given -- as expected for moderate energy configurations -- by thin layers, then the resulting density function in fact is even.

We do not further address here the compactness and lim-inf properties but prove the existence of a recovery sequence for a given smooth limit point and show a corresponding matching upper bound estimate.
\begin{theorem}\label{thm:limsup}
Fix a smooth compact orientable hypersurface $S\subset\R^3$ without boundary such that $\Ha^2(S)=\frac{1}{2}M_T$. Then there exists a sequence $(u_\eps,v_\eps)_{\eps>0}$ in $\mathcal K_\e$ such that 
\begin{align}
	u_\eps\mathcal{L}^3 \,&\stackrel{*}{\rightharpoonup}\, 2\Ha^2\lfloor S\quad\text{ as } \eps\to 0, \label{eq:limsup-conv}\\
	\mathcal G_{\e}(u_{\eps},v_{\eps}) \,&\to\, 2\int_{S}  \bigg(\frac{1}{4}H^2-\frac{1}{6}K\bigg)\, d\mathcal H^2. \label{eq:limsup}
\end{align}
\end{theorem}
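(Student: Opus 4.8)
\emph{Plan.} The plan is to realise $(u_\eps,v_\eps)$ as (rescaled) indicators of thin layers built around $S$ in normal coordinates, with the layer thicknesses tuned so that, after subtracting $2M_T$ and dividing by $\eps^2$, the energy converges exactly to the target. Fix a unit normal field $\nu$ on $S$ and $\delta_0>0$ so that $\Psi(y,t):=y+t\nu(y)$ is a $C^1$-diffeomorphism of $S\times(-\delta_0,\delta_0)$ onto a tubular neighbourhood $U$; its Jacobian is $J(y,t)=1+tH(y)+t^2K(y)$, where $H=\kappa_1+\kappa_2$, $K=\kappa_1\kappa_2$ are formed from the principal curvatures of $S$, and $d\mathcal H^2$ on the parallel surface $\Psi(S\times\{t\})$ equals $J(y,t)\,d\mathcal H^2\lfloor S$. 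I would set $u_\eps:=\eps^{-1}\chi_{A_\eps}$ with $A_\eps:=\Psi(\{(y,s):-a^-_\eps(y)<s<a^+_\eps(y)\})$, where $a^\pm_\eps(y)=\eps\pm\tfrac12\eps^2H(y)+o(\eps^2)$, the $o(\eps^2)$ part chosen (symmetrically in $\pm$) so that $|A_\eps|=\eps M_T$, i.e. $\int u_\eps=M_T$; and $v_\eps:=\eps^{-1}\chi_{B_\eps}$ with $B_\eps:=\Psi(\{(y,s):a^+_\eps(y)<s<b^+_\eps(y)\text{ or }-b^-_\eps(y)<s<-a^-_\eps(y)\})$, where on each normal fibre $b^\pm_\eps(y)$ is determined by the fibre-wise mass balance $\int_0^{a^+_\eps(y)}\!J(y,s)\,ds=\int_{a^+_\eps(y)}^{b^+_\eps(y)}\!J(y,s)\,ds$ and $\int_{-a^-_\eps(y)}^{0}\!J=\int_{-b^-_\eps(y)}^{-a^-_\eps(y)}\!J$. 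For $\eps$ small $A_\eps,B_\eps\subset U$ are disjoint open sets with smooth boundary, the balances give $|B_\eps|=|A_\eps|$, and $u_\eps v_\eps=0$, so $(u_\eps,v_\eps)\in\mathcal K_\e$. The convergence \eqref{eq:limsup-conv} is then immediate: for $\varphi\in C_c^0(\R^3)$, $\int u_\eps\varphi=\eps^{-1}\int_S\int_{-a^-_\eps(y)}^{a^+_\eps(y)}\varphi(\Psi(y,s))J(y,s)\,ds\,d\mathcal H^2(y)\to\int_S 2\varphi\,d\mathcal H^2$, using $\eps^{-1}(a^+_\eps+a^-_\eps)\to2$ and $J(y,s),\varphi(\Psi(y,s))\to 1,\varphi(y)$ uniformly as $s\to0$, together with $2\mathcal H^2(S)=M_T$.

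Next I would compute the two parts of $\mathcal F_\eps(u_\eps,v_\eps)$ to order $\eps^2$. For the surface term, $J_{u_\eps}=\partial A_\eps$ is the disjoint union of $\Psi(\{s=a^+_\eps(y)\})$ and $\Psi(\{s=-a^-_\eps(y)\})$; since $|\nabla_S a^\pm_\eps|=O(\eps^2)$ the area of each differs from $\int_S J(y,\pm a^\pm_\eps(y))\,d\mathcal H^2$ only by $O(\eps^4)$, and inserting $a^\pm_\eps=\eps\pm\tfrac12\eps^2 H+o(\eps^2)$ into $1+tH+t^2K$ and summing yields $\mathcal H^2(J_{u_\eps})=M_T+\eps^2\int_S(H^2+2K)\,d\mathcal H^2+o(\eps^2)$. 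For the transport term, the key observation is that $\phi_\eps:=-\operatorname{dist}(\cdot,S)$ is $1$-Lipschitz, and the map $T_\eps$ which on each normal fibre monotonically rearranges the $u_\eps$-mass of $\{0<s<a^+_\eps\}$ onto that of $\{a^+_\eps<s<b^+_\eps\}$ (and the mass of $\{-a^-_\eps<s<0\}$ onto that of $\{-b^-_\eps<s<-a^-_\eps\}$) satisfies $(T_\eps)_\#(u_\eps\mathcal L^3)=v_\eps\mathcal L^3$ and $\phi_\eps(x)-\phi_\eps(T_\eps x)=|x-T_\eps x|$ for $u_\eps$-a.e. $x$, since $x$ and $T_\eps x$ lie on a common straight normal ray along which $\phi_\eps$ is affine. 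By Kantorovich duality $T_\eps$ is optimal and $d_1(u_\eps,v_\eps)=\eps^{-1}\int_{A_\eps}|x-T_\eps x|\,dx$. Writing this fibre-by-fibre in normal coordinates and using the substitution $p=\int_0^s J(y,\sigma)\,d\sigma$ (under which the monotone map becomes a translation) turns the inner integral into $\int_0^{m(h)}\!\bigl(m^{-1}(p+m(h))-m^{-1}(p)\bigr)\,dp$ with $m(s)=\int_0^s J(y,\cdot)$; expanding in $\eps$ and using $a^\pm_\eps=\eps\pm\tfrac12\eps^2 H+o(\eps^2)$ together with the mass-fixed value of the $o(\eps^2)$ correction, the two fibre contributions combine to $2\eps^2-(\tfrac12H^2+\tfrac73K)\eps^4+o(\eps^4)$, and hence $\tfrac1\eps d_1(u_\eps,v_\eps)=M_T-\eps^2\int_S(\tfrac12H^2+\tfrac73K)\,d\mathcal H^2+o(\eps^2)$.

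Adding the two contributions gives $\mathcal F_\eps(u_\eps,v_\eps)=2M_T+\eps^2\int_S(\tfrac12H^2-\tfrac13K)\,d\mathcal H^2+o(\eps^2)$, so $\mathcal G_\eps(u_\eps,v_\eps)\to\int_S(\tfrac12H^2-\tfrac13K)\,d\mathcal H^2=2\int_S(\tfrac14H^2-\tfrac16K)\,d\mathcal H^2$, which is \eqref{eq:limsup}; as a consistency check, Theorem~\ref{main1} applied to $(u_\eps,v_\eps)$ yields the matching lower bound, because here $J_{u_\eps}$ is a union of two compact $C^1$ surfaces, $\theta$ is normal so $\theta\cdot\nu\equiv1$, and the thickness choice forces $M_j=1+O(\eps^2)$ on each, so the first two terms of \eqref{estfond} vanish in the limit and the last converges to $2\int_S Q(D\nu)\,d\mathcal H^2=2\int_S(\tfrac14H^2-\tfrac16K)\,d\mathcal H^2$. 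The main difficulty is exactly this second-order bookkeeping: since $2M_T$ is subtracted and the remainder divided by $\eps^2$, every ingredient — both layer thicknesses, the areas of the two parallel surfaces, and above all the optimal transport cost — must be expanded one order beyond its naive leading behaviour, and the construction tuned to match. The essential, non-obvious point is the curvature-dependent correction $\pm\tfrac12\eps^2H$ to the thickness: it is invisible at leading order but makes $M_j\to1$ at the right rate, and without it the $\eps^2$-coefficient of $\mathcal G_\eps$ is strictly too large (for the round sphere of radius $R$, for instance, a layer of constant thickness gives the limit $\tfrac{44\pi}{3}$ rather than the correct $\tfrac{20\pi}{3}$). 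Identifying $-\operatorname{dist}(\cdot,S)$ as a Kantorovich potential is what renders the nonlocal term tractable, by reducing it to a family of one-dimensional transport problems along normal rays.
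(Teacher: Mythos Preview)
Your argument is correct, but the construction differs substantively from the paper's. The paper takes the \emph{inner} boundaries of $\spt(u_\eps)$ to be exact parallel surfaces at constant signed distance $\ell_\pm(\eps)=\pm(\eps+a(\eps)\eps^3)$ from $S$, with $a(0)=-\tfrac23\int_S K$ fixed by the mass constraint; the asymmetry between the two sides then appears in a $p$-dependent split point $r_{p,\eps}=-\tfrac{\eps}{2}H(p)+O(\eps^2)$ and in $p$-dependent outer boundaries $L_\pm(\eps,p)$. You instead bend the inner boundaries by the curvature-dependent correction $a^\pm_\eps(y)=\eps\pm\tfrac12\eps^2 H(y)+O(\eps^3)$, which puts the split exactly on $S$ and makes each half-fibre carry equal mass. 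The trade-offs are complementary: the paper's parallel-surface choice makes $\mathcal H^2(J_{u_\eps})$ exact (no tangential gradient correction) but then only proves an \emph{upper} bound $d_1(u_\eps,v_\eps)\le\ldots$ via a transport map that is not shown to be optimal (mass near $s=0$ crosses $S$, so $-\operatorname{dist}(\cdot,S)$ is \emph{not} its Kantorovich potential); your choice costs an $O(\eps^4)$ area correction from $|\nabla_S a^\pm_\eps|=O(\eps^2)$, which you correctly control, but in return your transport moves every point strictly away from $S$, so $-\operatorname{dist}(\cdot,S)$ certifies optimality and you obtain $d_1$ exactly. Both routes land on the same $\eps^2$-coefficient $\int_S(\tfrac12H^2-\tfrac13K)$; your exact evaluation of $d_1$ in fact yields the full convergence $\mathcal G_\eps\to$ target directly, whereas the paper's inequality technically only gives the $\limsup$ bound (which is, of course, all that is needed for a recovery sequence). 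Your closing ``consistency check'' via Theorem~\ref{main1} is heuristically right but a bit loose: on your $J_{u_\eps}$ the ray direction $\theta$ equals $\nu_S$, not the unit normal of the nearby graph $\{s=a^\pm_\eps(y)\}$, so $\theta\cdot\nu=1+O(\eps^4)$ rather than identically $1$; this does not affect the limit.
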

\section{Proof of the lower bound (Theorem~\ref{main1})}
%========================================
% MK distance
%========================================
\subsection{The Monge-Kantorovich distance}\label{sec:MK}

In this section we recall some basic facts about the Monge-Kantorovich distance and the optimal mass transport problem. Consider two mass distributions $u,v \in L^1(\R^3,\R^+_0)$ with compact support and 
$$
\int_{\R^3}u\,dx=\int_{\R^3}v\,dx=1.
$$
The {\it Monge-Kantorovich distance} between $u$ and $v$ is then defined by
$$
d_1(u,v):=\min \left(\int_{\R^3\times\R^3}|x-y|\,d\gamma(x,y)\right)
$$
where the minimum is taken over all Radon measures $\gamma$ on $\R^3\times\R^3$ such that
$$
\int_{\R^3\times\R^3}\varphi(x)\,d\gamma(x,y)=\int_{\R^3}\varphi(x)u(x)\,dx, \quad \int_{\R^3\times\R^3}\psi(y)\,d\gamma(x,y)=\int_{\R^3}\psi(y)v(y)\,dy
$$
for all $\varphi,\psi\in C^0_c(\R^3)$. It turns out that the Monge-Kantorovich distance is characterized by an {\it optimal mass transport problem}. We denote by $\mathcal A(u,v)$ the set of all Borel vector fields $T \colon \R^3 \to \R^3$ pushing $u$ forward to $v$, i.e.
$$
\int_{\R^3}\eta(T(x))u(x)\,dx=\int_{\R^3}\eta(y)v(y)\,dy, \quad \forall \eta \in C^0(\R^3).
$$
The following result is well known (see \cite{CaFM02} and \cite{FeMc02}).
\begin{theorem}\label{thm:opt-transport}
Let $u,v$ be as above. 
\begin{itemize}
\item[1)] There exists an {\rm optimal transport map} $T \in \mathcal A(u,v)$, i.e.\ $ T$ solves the problem
$$
\min_{\overline T \in \mathcal A(u,v)}\int_{\R^3}|x-\overline T(x)|u(x)\,dx;
$$
\item[2)] there exists a {\rm Kantorovich potential} $\phi \in {\rm Lip}_1(\R^3)$, i.e.\ $\phi$ solves the dual problem
$$
\max_{\bar \phi \in{\rm Lip}_1(\R^3)}\int_{\R^3}\bar \phi(x)(u(x)-v(x))\,dx;
$$
\item[3)] the identities 
$$
d_1(u,v)=\int_{\R^3}|x- T(x)|u(x)\,dx=\int_{\R^3}\phi(x)(u(x)-v(x))\,dx
$$
hold;
\item[4)] every optimal transport map $T$ and every Kantorovich potential $\phi$ satisfy \begin{equation}\label{rays}
	\phi(x)-\phi(T(x))=|x-T(x)|, \quad \textrm{a.e.\,$x \in \spt(u)$}.
\end{equation}
\item[5)]
$\phi$ can be chosen such that
\begin{eqnarray}
  \phi(x) &=& \min_{y\in\spt(v)}\big(\phi(y)+|x-y|\big)\quad\text{ for
  any }x\in\spt(u),\label{eq:add-prop-phi1}\\ 
  \phi(y) &=& \max_{x\in\spt(u)}\big(\phi(x)-|x-y|\big)\quad\text{ for
  any }y\in\spt(v),\label{eq:add-prop-phi2}
\end{eqnarray}
and $T$ can be chosen as the unique monotone transport map in the sense of\/~\cite{FeMc02},
\begin{gather*}
  \frac{x_1-x_2}{|x_1-x_2|}
  +\frac{T(x_1)-T(x_2)}{|T(x_1)-T(x_2)|}\,\neq\, 0\ \text{ for all
  }x_1\neq x_2\in\R^3\text{ with }T(x_1)\neq T(x_2).
\end{gather*}
\end{itemize}
\end{theorem}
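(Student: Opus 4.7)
This is a classical statement about Monge--Kantorovich transport with the Euclidean cost $c(x,y)=|x-y|$, and my plan is to follow the standard sequence of arguments (Kantorovich duality plus the Caffarelli--Feldman--McCann / Feldman--McCann ray decomposition) rather than attempt a self-contained derivation. I would split the five assertions into three logical blocks: (a) the dual problem and duality identity, (b) the ray condition and canonical choice of potential, (c) the existence and monotone selection of the transport map.

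For block (a), which covers (2) and the second identity in (3), the right setting is the Kantorovich relaxation: minimize $\int |x-y|\,d\gamma$ over Radon probability measures $\gamma$ on $\R^3\times\R^3$ with marginals $u\,\mathcal{L}^3$ and $v\,\mathcal{L}^3$. Since the supports of $u$ and $v$ are compact, the admissible set is weakly$^*$ compact and the cost is weakly$^*$ lower semicontinuous, yielding a minimizer $\gamma_0$. Kantorovich duality for the 1-Lipschitz cost gives
\[
\int|x-y|\,d\gamma_0 \,=\, \sup_{\bar\phi\in\Lip_1(\R^3)}\int_{\R^3}\bar\phi(x)(u(x)-v(x))\,dx,
\]
and attainment of the supremum by some $\phi\in\Lip_1(\R^3)$ follows from Ascoli--Arzelà applied to a maximizing sequence normalized to vanish at a fixed point of the compact set $\spt(u)\cup\spt(v)$.

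For block (b), complementary slackness at the optimal pair $(\gamma_0,\phi)$ forces $\phi(x)-\phi(y)=|x-y|$ for $\gamma_0$-almost every $(x,y)$, which is \eqref{rays} once a transport map $T$ has been constructed in block~(c). Thus $\gamma_0$ concentrates on a Borel union of line segments -- the \emph{transport rays} -- along which $\phi$ decreases at unit speed, so $\theta=\nabla\phi$ is the ray direction wherever it exists. To obtain the canonical choices \eqref{eq:add-prop-phi1}--\eqref{eq:add-prop-phi2} in (5), I would replace $\phi$ by its $c$-transform and double $c$-transform with respect to the supports of $v$ and $u$; each such transform stays in $\Lip_1(\R^3)$, does not decrease the dual functional, and enforces the pointwise min/max representations on $\spt(u)$ and $\spt(v)$ respectively.

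Block (c), comprising part (1), the first identity in (3), and the monotonicity clause of (5), is the delicate Monge step since $c(x,y)=|x-y|$ is not strictly convex and a measurable selection from $\spt(\gamma_0)$ is not automatic. Here I would invoke the Feldman--McCann and Caffarelli--Feldman--McCann construction: first show that the transport set decomposes, up to $u$-null sets, into a Borel family of disjoint maximal rays of $\phi$; next disintegrate $u\,\mathcal{L}^3$ and $v\,\mathcal{L}^3$ with respect to this foliation; then on each ray solve the resulting one-dimensional transport problem with cost $|s-t|$ by monotone rearrangement, which has a unique nondecreasing solution because the conditional marginals are non-atomic (this is where absolute continuity of $u$ and $v$ enters). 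Gluing these one-dimensional monotone maps produces a Borel $T\in\mathcal A(u,v)$ whose graph lies in $\spt(\gamma_0)$, hence optimal, and which automatically satisfies the global non-antipodality condition of (5). The main obstacle is precisely this step: one must argue that distinct rays do not meet except possibly at the terminal ``fixed-point'' set $\{T(x)=x\}$, and that this set contributes nothing to the cost and can be handled separately by setting $T=\mathrm{Id}$ there. Since the paper only uses the statement and cites \cite{FeMc02,CaFM02}, I would state block (c) as a direct consequence of those references rather than reprove the ray-disintegration machinery.
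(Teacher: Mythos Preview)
Your proposal is correct and in fact considerably more detailed than what the paper does: the paper gives no proof at all, but simply introduces the theorem as ``well known'' and cites \cite{CaFM02} and \cite{FeMc02}. Your final remark anticipates exactly this, so your approach and the paper's treatment coincide.
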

A key property of the $d_1$-distance is that the associated optimal mass transport takes place along rays, that are defined as follows.
\begin{definition}\label{def:rays}
Let $\phi$ be  an optimal Kantorovich potential as above. A {\it transport ray} is a maximal line segment in $\R^3$ with endpoints $a,b\in \R^3$ such that $\phi$ has slope one on that segment, that is
\begin{align*}
	&a \in \spt(u), \quad b \in \spt(v), \quad a \neq b,\\
	&\phi(a)-\phi(b)=|a-b|,\\
	&|\phi(a+t(a-b))-\phi(b)|<|a+t(a-b)-b|, \quad \forall t>0,\\
	&|\phi(b+t(b-a))-\phi(a)|<|b+t(b-a)-a|, \quad \forall t>0.
\end{align*}
The transport set $\mathcal T$ is defined as the set of all points which lie in the relative interior of some transport ray.
\end{definition}
Some important properties of
transport rays are given in the next proposition.
\begin{proposition}[\cite{CaFM02}]\label{prop:cfm-rays}
\begin{enumerate}
\item Two rays can only intersect in a common endpoint.
\item
The set of endpoints of transport rays form a Borel set of measure zero.
\item If $z$ lies in the interior of a ray with endpoints $a\in\spt(u),b\in\spt(v)$
then $\phi$ is differentiable in $z$ with $\nabla\phi(z)\,=\,(a-b)/|a-b|$.
\end{enumerate}
\end{proposition}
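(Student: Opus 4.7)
I would attack the three items in the order (1), (3), (2): the first is a direct consequence of the $1$-Lipschitz property of $\phi$, the third is a calculus statement once we have (1) and the ray-maximality, and the second is the genuinely measure-theoretic obstruction.

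For (1), suppose two distinct transport rays $R_1,R_2$ meet at a point $z$ interior to at least one of them, say $R_1$, with unit tangent directions $\theta_1,\theta_2$ chosen so that $\phi$ increases along $\theta_i$. For $s,t>0$ small enough that $p_1:=z-s\theta_1\in R_1$ and $p_2:=z+t\theta_2\in R_2$, the defining property of a ray gives $\phi(p_2)-\phi(p_1)=s+t$, while the triangle inequality yields $|p_2-p_1|=|t\theta_2+s\theta_1|\le s+t$ with equality if and only if $\theta_1=\theta_2$. Since $\phi\in\Lip_1(\R^3)$ forces equality here, we conclude $\theta_1=\theta_2$, so $R_1$ and $R_2$ lie on a common line through $z$; by the maximality in Definition~\ref{def:rays} they then coincide, contradicting $R_1\neq R_2$.

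For (3), set $\theta=(a-b)/|a-b|$ and observe that, since $\phi\in\Lip_1(\R^3)$ and $\phi(z)=\phi(b)+|z-b|=\phi(a)-|a-z|$, the inequalities $\phi(z+w)\le\phi(b)+|z+w-b|$ and $\phi(z+w)\ge\phi(a)-|z+w-a|$ bracket
\begin{equation*}
|z-a|-|z+w-a|\,\le\,\phi(z+w)-\phi(z)\,\le\,|z+w-b|-|z-b|.
\end{equation*}
Expanding the two norms by Taylor in $w$ around $0$, and using that $z-b$ and $a-z$ are positive multiples of $\theta$, both bounds are of the form $\theta\cdot w+O(|w|^2/r)$ with $r:=\min\{|z-a|,|z-b|\}>0$. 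Squeezing yields $\phi(z+w)-\phi(z)=\theta\cdot w+o(|w|)$, that is, $\phi$ is differentiable at $z$ with $\nabla\phi(z)=\theta$.

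For (2), which I expect to be the main obstacle, the plan is to write the endpoint set as the disjoint union $E^+\cup E^-$ of initial endpoints in $\spt(u)$ and final endpoints in $\spt(v)$, and to show each has three-dimensional Lebesgue measure zero. By (3), the direction field $\theta=\nabla\phi$ is well defined on the open transport set $\mathcal T$, and by (1) distinct rays do not cross there; a Rademacher-type argument (the countable Lipschitz property of $\theta$ later exploited in Proposition~\ref{prop:just-para}) lets us cover $\mathcal T$ up to a null set by countably many relatively open ``ray bundles'' parametrised bi-Lipschitzly by a two-dimensional transversal section times an open interval. Inside each bundle, $E^+$ and $E^-$ appear as graphs over the section of Lipschitz length functions, hence lie in $\Ha^2$-rectifiable sets and thus have $\mathcal L^3$-measure zero; a countable union then gives the claim. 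The delicate construction of the ray bundles and the verification of the Lipschitz bounds is precisely what is carried out in \cite{CaFM02}, which I would invoke to conclude.
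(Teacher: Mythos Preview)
The paper does not give its own proof of this proposition: it is stated with attribution to \cite{CaFM02} and immediately followed by the next subsection, so there is nothing to compare your argument against except the original Caffarelli--Feldman--McCann paper. Your sketch is in line with the standard arguments there: the triangle-inequality/\,$1$-Lipschitz trick for (1), the two-sided squeezing for (3), and the ray-bundle decomposition for (2) are exactly the ingredients used in \cite{CaFM02}.

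One small point on (1): as written, your choice $p_2=z+t\theta_2\in R_2$ tacitly assumes that from $z$ one can move along $R_2$ in the direction of increasing $\phi$, which fails if $z$ happens to be the $\spt(u)$-endpoint of $R_2$. Since $z$ is interior to $R_1$, you can move in \emph{both} directions along $R_1$, so the fix is simply to pick the sign on $R_1$ to match whichever direction is available on $R_2$; the rest of the argument is unchanged. With that adjustment your proofs of (1) and (3) are complete, and for (2) your deferral to \cite{CaFM02} is exactly what the paper itself does.
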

%============================================================
% Parametrization by rays
%============================================================
\subsection{Parametrization by rays and mass coordinates}
\label{subsec:para}
We now fix a pair of densities $(u,v)\in\K_\eps$ such that $J_u=\bigcup_{j=1}^L S_j$ is the finite union of pairwise disjoint, compact, orientable surfaces $S_j$ of class $C^1$ in $\R^3$; we let $\Scal$ be the pairwise disjoint family of the surfaces representing $J_u$. Let us start deriving the lower estimate stated in Theorem \ref{main1}. We follow \cite{PeRoe09} and construct a suitable parametrization of the support of $u$  that allows us to characterize and estimate $d_1(u,v)$.

We fix a
Kantorovich potential $\phi\in \Lip_1(\R^3)$ for
the mass transport from $u$ to $v$ as in Proposition
\ref{thm:opt-transport} and let $\mathcal{T}$ denote the set of transport rays as defined in Definition \ref{def:rays}. 

We next introduce several quantities that relate the structure of the support of
$u, v$ to the optimal Kantorovich potential $\phi$.

\begin{definition}[Parametrization by rays]\label{def:para}
Let $S\in \Scal$  be one of the smooth, embedded, orientable surfaces that constitute the boundary of\/ $\spt(u)$. We then define
\begin{enumerate}
\item
sets $E_S,E $ of interface points that lie in the relative interior of a ray,
\begin{equation*}
  E_S := \{p\in S\cap \mathcal{T}\},\quad E\,:=\, \bigcup_{S\in\Scal} E_S,
\end{equation*}
\item
a direction field
\begin{eqnarray*}
  \theta: E\to \sphere^2, \qquad \theta(p) := \nabla\phi(p),
\end{eqnarray*}
\item
the positive and negative total ray length $L^+,L^-:E\to \R$,
\begin{eqnarray}
  L^+(p) &:=& \sup\{t>0 : \phi(p+t\theta(p))-\phi(p)\,=\,t\},\label{eq:def-L+}\\ 
  L^-(p) &:=& \inf\{t<0 : \phi(p+t\theta(p))-\phi(p)\,=\,t\},\label{eq:def-L-} 
\end{eqnarray}
\item the \emph{effective} positive ray length $l^+: E\to\R$,
\begin{align}
  l^+(p) := \sup\ \{t\in [0,L^+(p)] : p + \tau\theta(p)\in
  \Int\bigl(\spt(u)\bigr)\text{ for all }0<\tau<
  t\}.\label{eq:def-l+} 
\end{align}
\end{enumerate}
Finally we define the sets
\begin{eqnarray}\label{eq:def-E^i}
  D_S &:=& \big\{(p,t): p\in E, 0\leq t<l^+(p)\},\quad D\,:=\, \bigcup_{S\in\Scal} D_S
\end{eqnarray}
and a
map $\Psi$ which, suitably restricted, will serve as a parametrization of
$\spt(u)\cup\spt(v)$ by 
\begin{gather}
  \Psi: E\times\R\,\to\,\R^2,\quad
    \Psi(p,t) \,:=\, p + t\theta(p).\label{eq:def-psi}
\end{gather}
\end{definition}
The {\itshape effective ray length} is
introduced to obtain the injectivity of the parametrization 
in the case that a ray crosses several times the boundary
of $\spt(u)$.
The set $\{p\in E: l^+(p)>0\}$ represents the points of the boundary
where mass is transported in the `right direction' (see Figure \ref{fig:para1} and Figure \ref{fig:para2} for an illustration).
\begin{figure}
\centering
\includegraphics[width=0.8\textwidth]{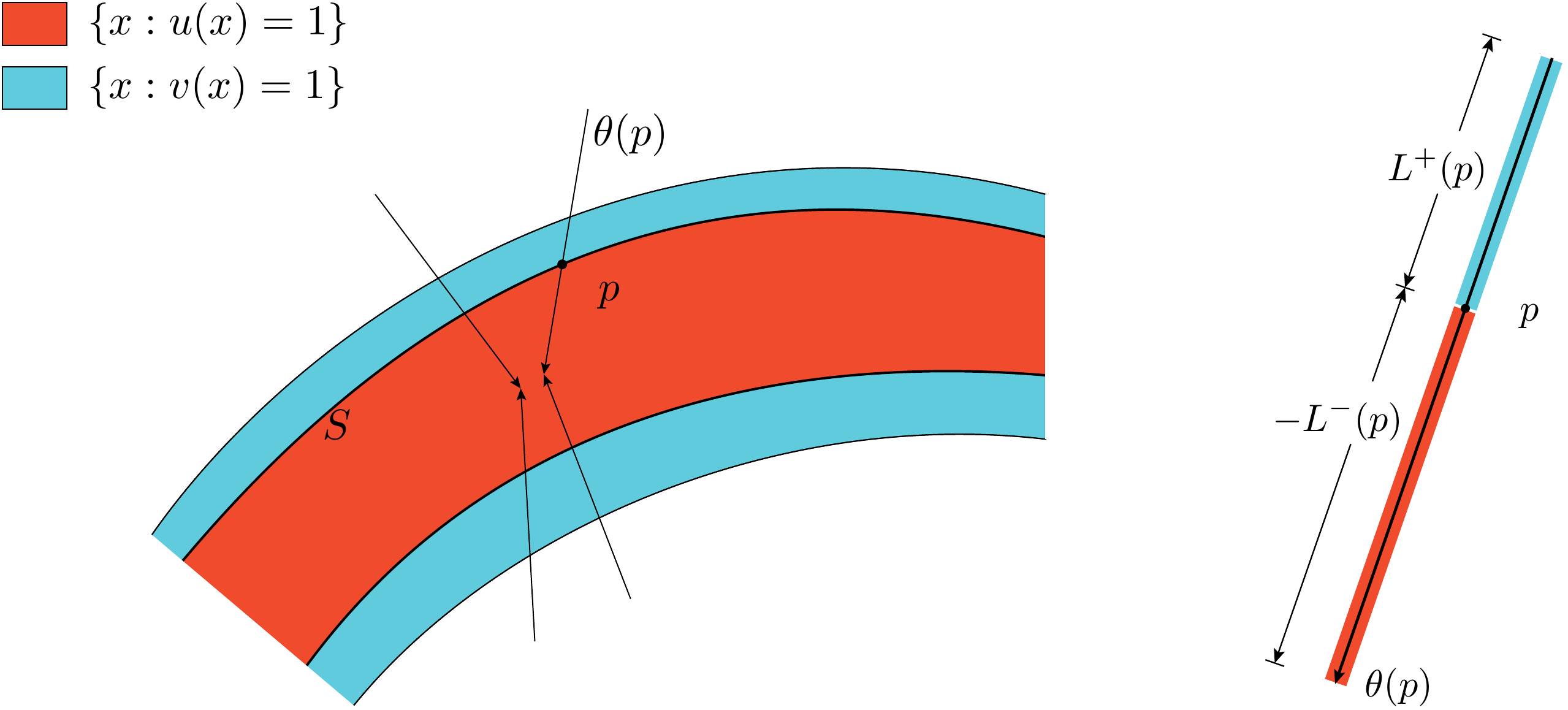}
\caption{Parametrization by rays: generic situation}
\label{fig:para1}
\end{figure}
\begin{figure}
\centering
\includegraphics[width=0.95\textwidth]{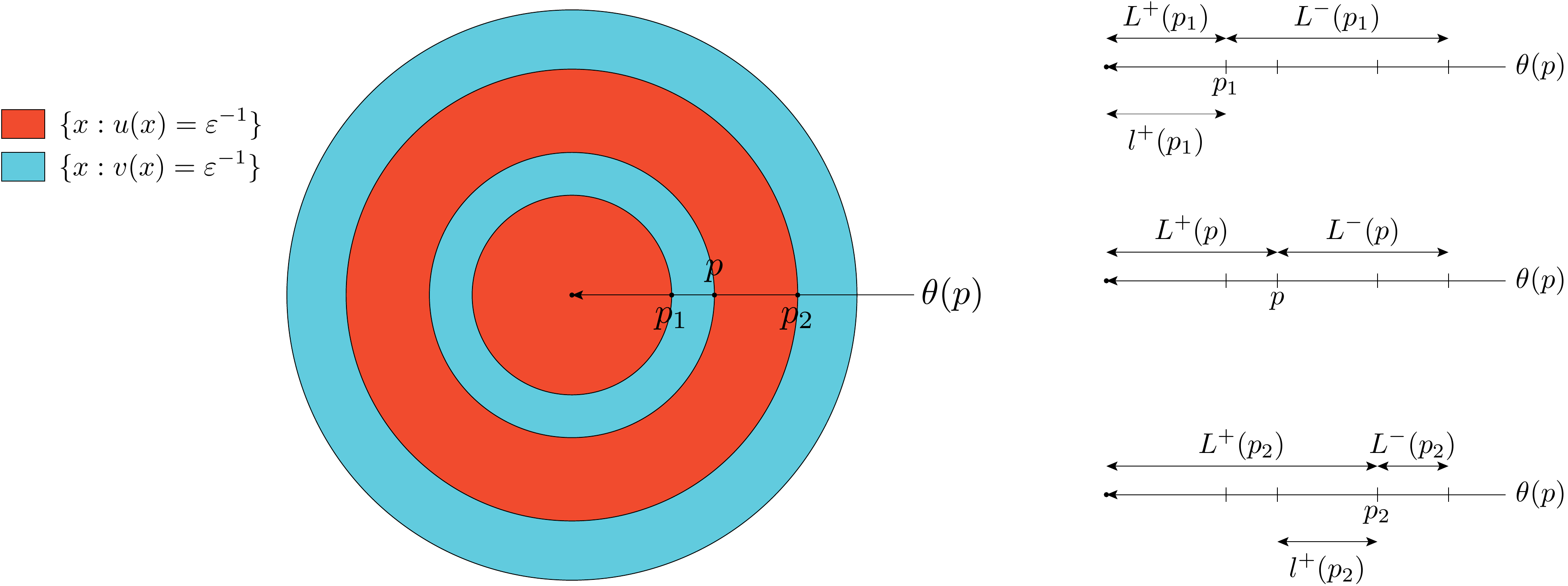}
\caption{Parametrization by rays: non-generic situation}
\label{fig:para2}
\end{figure}
The next Proposition summarizes some important properties of the quantities defined above that allow to build a suitable parametrization.
\begin{proposition}\label{prop:just-para}
\begin{enumerate}
\item\label{it:lip}
Consider for $\delta>0$
\begin{gather}
  E_\delta \,:=\, \{p\in E :
  L^+(p),|L^-(p)|\geq\delta\}. \label{eq:def-E-delta} 
\end{gather}
Then $\theta$ is Lipschitz continuous on $E_\delta$ with
\begin{eqnarray}
  \big|\theta(p_1)-\theta(p_2)\big|
  &\leq& \frac{1}{\delta}\, |p_1-p_2|\quad\text{ for all }p_1,p_2\in
  E_\delta. \label{eq:lipconst-theta} 
\end{eqnarray}
\item\label{it:Lmeas}
The positive, negative, and effective positive ray lengths $L^+, L^-,l^+ : E\to\R$
are measurable.
\item\label{it:bilip}
For any $S\in\Scal$ and almost all $\{p,t\}\in D_S$ the ray direction $\theta$ and the transformation $\Psi$ are approximately differentiable and $D\Psi(p,t): T_{(p,t)} (S\times \R)\to \R^3$ has full range.
%The parametrization $\Psi$ is locally bi-Lipschitz in $\{(p,t)\in E\times \R\,:\, \Psi(p,t)\in\mathcal{T}\}$.
\end{enumerate}
\end{proposition}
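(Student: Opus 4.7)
The strategy is to prove the three parts in order: (1) and (2) exploit the $1$-Lipschitz structure of the Kantorovich potential $\phi$, while (3) combines Rademacher's theorem applied to a Lipschitz extension of $\theta$ with a transversality argument between $\theta$ and $S$. The main obstacle will be verifying the full-range property in (3).

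For (1), I use that for $p\in E_\delta$ the $1$-Lipschitz property of $\phi$ combined with the slope-one condition on the ray through $p$ forces $\phi(p\pm\delta\theta(p))=\phi(p)\pm\delta$. Applying $1$-Lipschitzness of $\phi$ to the two pairs $(p_1+\delta\theta(p_1),p_2-\delta\theta(p_2))$ and $(p_2+\delta\theta(p_2),p_1-\delta\theta(p_1))$, squaring both inequalities and adding yields, after using the identity $|\theta(p_1)+\theta(p_2)|^2=4-|\theta(p_1)-\theta(p_2)|^2$ (which holds since $|\theta(p_i)|=1$),
\[
(\phi(p_1)-\phi(p_2))^2+\delta^2|\theta(p_1)-\theta(p_2)|^2\,\leq\,|p_1-p_2|^2.
\]
Dropping the nonnegative first summand gives exactly~\eqref{eq:lipconst-theta}.

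For (2), (1) together with $E=\bigcup_{\delta>0}E_\delta$ shows that $\theta$ is Borel on $E$. Continuity of $\phi$ makes $A_t:=\{p\in E:\phi(p+t\theta(p))=\phi(p)+t\}$ Borel for every rational $t>0$, and the $1$-Lipschitz property of $\phi$ (which forces $\phi$ to have slope one on all of $[0,L^+(p)]$ whenever it has slope one at the right endpoint) yields $\{L^+\geq t\}=\bigcap_{s\in\mathbb{Q}\cap[0,t]}A_s$, which is Borel. The case of $L^-$ is symmetric. For $l^+$, one uses that $U:=\{(p,\tau)\in E\times\R_+:p+\tau\theta(p)\in\Int(\spt u)\}$ is relatively open, so $\{l^+\geq t\}$ is expressed as a countable Borel intersection of sections of $U$ over rational $\tau\in(0,t)$.

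For (3), fix $\delta>0$ and extend $\theta|_{E_\delta}$ via Kirszbraun to a Lipschitz map on $\R^3$; Rademacher's theorem combined with the $C^1$ regularity of $S$ then yields approximate differentiability of $\theta|_{E_\delta\cap S}$ at $\Ha^2$-a.e.\ point of $E_\delta\cap S$, and letting $\delta\to 0$ one covers $\Ha^2$-almost all of $E_S$. Consequently $\Psi$ is approximately differentiable at $\Ha^3$-a.e.\ $(p,t)\in D_S$ with
\[
D\Psi(p,t)[(v,s)]\,=\,(I+tD_S\theta(p))\,v+s\,\theta(p),\quad(v,s)\in T_pS\oplus\R,
\]
where $D_S\theta(p):T_pS\to\R^3$ denotes the tangential derivative. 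Full range amounts to $(I+tD_S\theta(p))$ being injective on $T_pS$ and $\theta(p)$ lying outside its image. Injectivity can fail only for $t$ in the zero set of a polynomial of degree at most two with measurable coefficients in $p$, so by Fubini this set is $\Ha^3$-negligible in $D_S$. The image condition, at $t=0$, reduces to $\theta(p)\cdot\nu(p)\neq 0$ and persists for small $t>0$ by continuity of the relevant determinant. The most delicate step is therefore to verify that the tangential set $\{p\in E_S\cap\{l^+>0\}:\theta(p)\cdot\nu(p)=0\}$ is $\Ha^2$-null; the intended argument combines the area formula applied to $\Psi$ with the absolute continuity of the pushforward measure $\Psi_\#(\Ha^2\otimes\mathcal L^1)$ with respect to $\mathcal L^3$ on $\spt(u)$, which forces tangential rays to cover an $\mathcal L^3$-null subset of $\spt(u)$ and hence to contribute no $u$-mass, from which the $\Ha^2$-negligibility of the tangential set on $\{l^+>0\}$ follows.
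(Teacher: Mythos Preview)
Your arguments for (1) and (2) are correct and coincide with the paper's; for (1) you reach the same inequality $\delta^2|\theta(p_1)-\theta(p_2)|^2+(\phi(p_1)-\phi(p_2))^2\leq|p_1-p_2|^2$ by a more direct route than the paper's parallelogram-identity computation.

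Part (3) is where your approach diverges, and there is a genuine gap. Your argument that the tangential set $\{\theta\cdot\nu=0,\ l^+>0\}$ is $\Ha^2$-null does not go through: the area formula indeed shows that $\Psi$ collapses this set (times an interval in $t$) to an $\mathcal L^3$-null subset of $\spt(u)$, but a countably Lipschitz map with vanishing Jacobian always has null image regardless of the measure of its domain, so nothing about the $\Ha^2$-size of the basepoint set on $S$ follows. The paper does not try to control this set separately. Instead it proves full range by contradiction, applying the Lipschitz estimate of part (1) not to basepoints in $E$ but to the \emph{interior ray points} $\Psi(p,t)$ and $\Psi(p_k,t+s_k)$, which themselves lie in some $E_\sigma$ with $\sigma=\tfrac12\min\{L^+(p)-t,\,t-L^-(p)\}$. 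This bounds $|\theta(p_k)-\theta(p)|$ by $\sigma^{-1}|\Psi(p_k,t+s_k)-\Psi(p,t)|$, forcing $(\theta(p_k)-\theta(p))/s_k\to 0$ along any would-be kernel direction and yielding $-\theta(p)\in T_pS$, which is then ruled out via $\theta\cdot\nu>0$. A second advantage of the paper's route over your polynomial-in-$t$ argument is that it delivers full range for \emph{every} $t\in(0,l^+(p))$ once $p$ is fixed outside a null set. This all-$t$ version is exactly what Proposition~\ref{prop:Jac} invokes to drop the absolute value from the Jacobian (by continuity from $t=0$) and obtain the explicit polynomial formula~\eqref{eq:def-mass2} for $\mathfrak m_p$, on which the Taylor computations of Proposition~\ref{prop-mass} rest. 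Your a.e.-in-$t$ conclusion would allow the quadratic $1+t\,\tr D\theta+t^2\tr\cof D\theta$ to change sign inside $(0,l^+(p))$, and those downstream arguments would fail.
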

\begin{proof}
To prove \eqref{it:lip} we follow \cite[Lemma 16]{CaFM02}. Fix $x_1,x_2,z_1,z_2\in \R^3$. By the parallelogram identity we have
\begin{align}
	& |{z_1-z_2}|^2 + |x_1-x_2|^2 - |(z_1+x_1)-(z_2+x_2)|^2 \notag\\
	=\,& |z_1-x_1|^2 + |z_2-x_2|^2 - |z_1-x_2|^2 - |x_1-z_2|^2. \label{eq:lip0}
\end{align}
Next fix $S\in\Scal$ and $p_1,p_2\in E_S$ with $L^+(p_i),|L^-(p_i)|\geq \delta$, $i=1,2$ and let 
\begin{align*}
	z_i \,=\, p_i + \delta\theta(p_i),\quad  x_i \,=\, p_i - \delta\theta(p_i),\qquad i=1,2.
\end{align*}
Then we obtain for the potential $\phi$
\begin{align*}
	\phi(z_i) \,=\, \phi(p_i) + \delta,\quad  \phi(x_i) \,=\, \phi(p_i) - \delta,\qquad i=1,2.
\end{align*}
It follows that
\begin{align}
	4\delta^2 |\theta(p_1)-\theta(p_2)|^2 \,&=\, |(z_1-x_1)-(z_2-x_2)|^2, \label{eq:lip3}\\
	|(z_1-z_2)+(x_2-x_1)|^2 \,&=\, 2|z_1-z_2|^2 + 2|x_2-x_1|^2 - |(z_1-z_2)-(x_2-x_1)|^2, \label{eq:lip4}\\
	|z_1-x_1|\,&=\, 2\delta \,=\, \phi(z_1)-\phi(x_2) -\big(\phi(p_1)-\phi(p_2)\big), \notag\\
	|z_2-x_2|\,&=\, 2\delta \,=\, \phi(z_2)-\phi(x_1) -\big(\phi(p_2)-\phi(p_1)\big)\notag
\end{align}
and therefore
\begin{align}
	|z_1-x_1|^2 + |z_2-x_2|^2 \,&=\, \big(\phi(z_1)-\phi(x_2)\big)^2 + \big(\phi(z_2)-\phi(x_1)\big)^2 \notag\\
	&-2\big(\phi(p_1)-\phi(p_2)\big)\big(\phi(z_1)-\phi(x_2)-\phi(z_2)+\phi(x_1)\big) + 2 \big(\phi(p_1)-\phi(p_2)\big)^2 \notag\\
	&=\, \big(\phi(z_1)-\phi(x_2)\big)^2 + \big(\phi(z_2)-\phi(x_1)\big)^2 -2\big(\phi(p_1)-\phi(p_2)\big)^2 \notag\\
	&\leq\, |z_1-x_2|^2 + |z_2-x_1|^2 - 2\big(\phi(p_1)-\phi(p_2)\big)^2. \label{eq:lip7}
\end{align}
We then deduce
\begin{align}
	4\delta^2|\theta(p_1)-\theta(p_2)|^2 \,&\overset{\eqref{eq:lip3}}{=}\, |z_1-x_1 -(z_2-x_2)|^2 \notag\\
	&\overset{\eqref{eq:lip4}}{=}\,2|z_1-z_2|^2 + 2|x_2-x_1|^2 - |z_1-z_2 -(x_2-x_1)|^2\notag\\
	&\overset{\eqref{eq:lip0}}{=}\,4|z_1+x_1 -(z_2+x_2)|^2 - 4|z_1-z_2 -(x_2-x_1)|^2 \notag\\
	&\qquad +2\big(|z_1-x_1|^2 + |z_2-x_2|^2 -|z_1-x_2|^2 -|x_1-z_2|^2\big) \notag\\
	&\overset{\eqref{eq:lip7}}{\leq}\, 4|z_1+x_1 -(z_2+x_2)|^2 -4\big(\phi(p_1)-\phi(p_2)\big)^2 \notag\\
	&\overset{\phantom{\eqref{eq:lip0}}}{=}\, 4|p_1-p_2|^2 -  4\big(\phi(p_1)-\phi(p_2)\big)^2. \label{eq:lip8}
\end{align}
This implies the first claim.\\
For the proof of \eqref{it:Lmeas} we refer to \cite[Lemma 7.5]{PeRoe09} and \cite{CaFM02}.\\
As \eqref{it:bilip}: using a countability argument and the measurability of $l^+$ we see that it is enough to prove the claim for all $S\in\Scal$ and almost all $(p,t)\in D_S$ with $l^+(p)>\delta$ such that the set $\{\tilde{p} \in E_S\,:\,l^+(\tilde p)>\delta\}$ has full density at $p$. Since $\theta$ is Lipschitz continuous on this set we further may assume that $\theta$ is approximately differentiable in $p$. We then obtain that $\Psi$ is approximately differentiable in $(p,t)$ for any $t\in (0,l^+(p))$. Let now $\sigma = \frac{1}{2}\min\{L^+(p)-t,t-L^-(p)\}>0$. If the claim is false there exists $\tau\in T_pS$ and a sequence $s_k\to 0$ $(k\to\infty)$ and a sequence $p_k\to p$  with
\begin{align*}
	\frac{p_k-p}{s_k}\,\to\, \tau, \qquad
	\left| \frac{\Psi(p_k,t+s_k)-\Psi(p,t)}{s_k}\right| \,\to\, 0.
\end{align*}
By the measurability of $L_+,L_-$ we may assume $\min\{L^+(p_k)-(t+s_k),t+s_k-L^-(p_k)\}>\sigma$ for all $k\in\N$. Now we deduce from \eqref{eq:lip8} with $p_1=\Psi(p,t), p_2= \Psi(p_k,t+s_k), \delta = \sigma$ that
\begin{align*}
	0\,=\,& \lim_{k\to\infty} 2\left| \frac{\Psi(p_k,t+s_k)-\Psi(p,t)}{s_k}\right|\\
	\geq\, & \lim_{k\to\infty} \left| \frac{p_k -p}{s_k} + (t+s_k)\frac{\theta(p_k)-\theta(p)}{s_k} + \theta(p)\right| + \sigma \lim_{k\to\infty} \left|\frac{\theta(p)-\theta(p_k)}{s_k}\right|.
\end{align*}
We deduce that the second term has to vanish, i.e. $\frac{\theta(p)-\theta(p_k)}{s_k}\to 0$, which implies
\begin{align*}
	0 \,=\, \tau + \theta(p).
\end{align*}
Since $\tau \in T_pS$ but on the other hand $\theta(p)\cdot\nu(p)>0$ this is a contradiction.
\end{proof}
It is often convenient to generate from $D\theta(p)$ an orthonormal basis of eigenvectors. In particular, we can represent $Q(D\theta)$ in terms of the corresponding eigenvalues.
\begin{lemma}\label{lem:evDtheta}
For almost all $p\in E$, $D\theta(p)$ is diagonalizable, and there exists a positively oriented orthonormal basis $\{v_1,v_2,\theta(p)\}$ of eigenvectors with $\det(v_1,v_2,\theta(p))=1$ and eigenvalues $\evla,\evla$ such that
\begin{align*}
	D\theta(p) v_1 = \evla v_1,\quad D\theta(p) v_2 = \evmu v_2, \quad [D\theta(p)] \theta(p) = 0.
\end{align*}
Moreover, we have
\begin{align}
	\tr D\theta \,&=\, \evla + \evmu,\qquad \tr (\cof D\theta) \,=\, \evla\evmu, \notag\\
	Q(D\theta) \,&=\, \frac{1}{4}(\evla+\evmu)^2-\frac{1}{6}\evla\evmu = \frac{1}{6}(\evla+\evmu)^2+\frac{1}{12}(\evla^2+\evmu^2). \label{eq:evDtheta}
\end{align}
\end{lemma}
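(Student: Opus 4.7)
The plan is to exploit the fact that $\theta=\nabla\phi$ is a gradient of a $1$-Lipschitz function, so $D\theta$ is a symmetric Hessian at every point of twofold differentiability; combined with $|\theta|\equiv 1$ (since $\phi\in\Lip_1(\R^3)$) this will force $\theta(p)$ itself to be a zero-eigenvector of $D\theta(p)$, after which the spectral theorem delivers the asserted orthonormal basis, and the trace/cofactor identities reduce to a direct algebraic computation.

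\textbf{Step 1 (extension and regularity).} The matrix $D\theta(p)$ only makes intrinsic sense as a $3\times 3$ object if $\theta$ is viewed on a three-dimensional neighbourhood of $p$. The key observation is that on every transport ray $\phi$ is affine of slope one, so $\theta$ is \emph{constant along rays}: $\theta(p+t\theta(p))=\theta(p)$ for $t\in(L^-(p),L^+(p))$. Combining this with the bi-Lipschitz character of the parametrization $\Psi$ on $E_\delta\times(-\delta/2,\delta/2)$ obtained from Proposition~\ref{prop:just-para}\eqref{it:lip}--\eqref{it:bilip}, the Lipschitz estimate~\eqref{eq:lipconst-theta} transfers to a Lipschitz estimate for $\theta$ on the open $3$-dimensional ray tube. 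Since $\bigcup_{\delta>0} E_\delta$ exhausts $E$ up to an $\Ha^2$-null set (by Proposition~\ref{prop:cfm-rays}), Rademacher's theorem together with a Fubini/area-formula argument yields that $D\theta(p)\in\R^{3\times 3}$ exists for $\Ha^2$-a.e.\ $p\in E$, and on that tube $\phi$ is $C^{1,1}$ so $D\theta=D^2\phi$ is \emph{symmetric} wherever it exists.

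\textbf{Step 2 (identification of the zero eigenvector).} Differentiating the identity $|\theta|^2\equiv 1$ at a point of differentiability gives $(D\theta)^{\!\top}\theta=0$; by the symmetry established in Step~1 this upgrades to $D\theta(p)\,\theta(p)=0$. Hence $\theta(p)$ is an eigenvector with eigenvalue $0$. The spectral theorem applied to the symmetric matrix $D\theta(p)$ then produces an orthonormal basis of eigenvectors, which must contain $\theta(p)$ and two vectors $v_1,v_2\in\theta(p)^\perp$ with real eigenvalues $\evla,\evmu$; after possibly swapping $v_1\leftrightarrow v_2$ we obtain $\det(v_1,v_2,\theta(p))=1$ as required.

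\textbf{Step 3 (trace, cofactor and the algebraic identity).} In the orthonormal basis $\{v_1,v_2,\theta(p)\}$ the matrix $D\theta(p)$ is $\mathrm{diag}(\evla,\evmu,0)$, whose cofactor matrix is $\mathrm{diag}(0,0,\evla\evmu)$. Thus $\tr D\theta=\evla+\evmu$ and $\tr(\cof D\theta)=\evla\evmu$, independent of the basis. Substituting into~\eqref{eq:def-Q} gives $Q(D\theta)=\tfrac14(\evla+\evmu)^2-\tfrac16\evla\evmu$, and the second equality in~\eqref{eq:evDtheta} is just the elementary expansion
\[
\tfrac14(\evla+\evmu)^2-\tfrac16\evla\evmu
=\tfrac14(\evla^2+\evmu^2)+\tfrac13\evla\evmu
=\tfrac16(\evla+\evmu)^2+\tfrac1{12}(\evla^2+\evmu^2).
\]

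The only delicate point is Step~1: one must control that the two-dimensional Lipschitz regularity of $\theta$ on $E_\delta$ really upgrades to three-dimensional Lipschitz regularity on a tube, so that $D\theta(p)$ exists as a genuine $3\times 3$ object and inherits symmetry from being a Hessian. Everything else is linear algebra.
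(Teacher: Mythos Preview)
Your proof is correct and follows essentially the same approach as the paper's one-line proof (``symmetry of $D\theta$, the fact that $|\theta|=1$, and direct computations''); you simply elaborate on points the paper leaves implicit, in particular the regularity question of why $D\theta(p)$ exists as a symmetric $3\times 3$ matrix, which you handle via the constancy of $\theta$ along rays and the Lipschitz estimate~\eqref{eq:lipconst-theta}. One minor quibble: the parenthetical ``since $\phi\in\Lip_1(\R^3)$'' only yields $|\nabla\phi|\leq 1$; the equality $|\theta|=1$ on $E$ is the content of Proposition~\ref{prop:cfm-rays}(3), which you in fact already use implicitly when you note that $\phi$ is affine of slope one on rays.
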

\begin{proof}
The assertions follow easily from the symmetry of $D\theta$, the fact that $|\theta|=1$, and direct computations.
\end{proof}
We proceed by building a parametrization and justifying a transformation
formula. For each $S$ as above, let $\Psi$ be as in \eqref{eq:def-psi}. Restricting this map 
suitably we obtain a parametrization of $\spt(u)$ which is
essentially one-to-one. 
\begin{proposition}\label{prop:parametrization}
For each $S\in \Scal$ denote by $E_S, \Psi_S, L^+_S, D_S$ etc.\ the respective quantities as defined above.
The restrictions $\Psi_S: D_S\to\R^2$ give, up to a Lebesgue
null set, an injective map onto $\spt(u)$:
for almost all $x\in \spt(u)$ there exists a unique
$S\in\Scal$ and a unique 
$(p,t)\in D_S$ such that $\Psi_S(p,t)=x$.
\end{proposition}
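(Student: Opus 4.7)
The plan is to build, off a Lebesgue null set, an inverse to the parametrization map $\Psi$. Given a generic point $x \in \spt(u)$, I would follow the unique transport ray through $x$ backwards (in direction $-\theta$) until it first exits $\Int(\spt(u))$: the exit point becomes $p$, its distance to $x$ becomes $t$, and the unique $S \in \Scal$ containing $p$ is the desired surface.

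To make this rigorous I would first discard three Lebesgue null subsets of $\spt(u)$: the boundary $J_u = \bigcup_{S\in\Scal} S$ (a finite union of smooth surfaces, hence Lebesgue null); the set of endpoints of transport rays (null by Proposition~\ref{prop:cfm-rays}(2)); and the set of points not lying in the relative interior of any ray. Each remaining $x$ lies in $\Int(\spt(u))$ and in the interior of a unique ray (by Proposition~\ref{prop:cfm-rays}(1)), and $\theta_x := \nabla\phi(x)$ is well defined by Proposition~\ref{prop:cfm-rays}(3). Setting $s^*(x) := \sup\{s>0 : x-s'\theta_x \in \Int(\spt(u))\ \text{for all}\ 0<s'<s\}$, $p := x-s^*(x)\theta_x$, and $t := s^*(x)$, compactness of $\spt(u)$ gives $0<s^*<\infty$ and $p \in \partial\spt(u) = J_u$, so $p \in S$ for a unique $S \in \Scal$. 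The closed segment from $p$ to $x$ lies in $\Int(\spt(u))$ except at $p$, so by continuity $l^+(p) > s^*$ and $(p,t) \in D_S$ with $\Psi_S(p,t) = x$, establishing the surjectivity claim.

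For injectivity, suppose $\Psi_{S_i}(p_i,t_i) = x$ for $i=1,2$. After excluding the null set of ray endpoints, $x$ lies in the relative interior of a unique ray which must contain both $p_i$. If $p_1 \neq p_2$, I would label them so that $\phi(p_1)<\phi(p_2)<\phi(x)$; then moving forward from $p_1$ along $\theta(p_1)$ one hits $p_2 \in J_u \subset \partial\spt(u)$ at parameter $\tau_0 = \phi(p_2)-\phi(p_1)$, which is strictly less than $t_1 = \phi(x)-\phi(p_1)$. This violates $l^+(p_1) > t_1$, forcing $p_1 = p_2$, hence $t_1 = t_2$ and $S_1 = S_2$ by pairwise disjointness of the~$S_j$.

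The main subtlety is the non-generic configuration in Figure~\ref{fig:para2}, where a single ray crosses $J_u$ several times before reaching $\spt(v)$: two different boundary points sitting on the same ray could a priori map to the same interior point. This is precisely the situation the effective length $l^+$ was designed to control, since it truncates each local parametrization piece at the first forward re-entry into $J_u$; the injectivity argument above uses exactly this feature. Measurability of the resulting inverse map follows from the measurability of $L^\pm$ and $l^+$ recorded in Proposition~\ref{prop:just-para}(\ref{it:Lmeas}).
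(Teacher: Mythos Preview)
The paper does not give an independent proof here but defers to the two-dimensional argument \cite[Prop.~7.9]{PeRoe09}; your outline is precisely the natural adaptation of that argument, and the injectivity half is correct as written---the effective length $l^+$ truncates each parametrization piece at the first forward re-entry into $J_u$, which is exactly what rules out two distinct boundary points on the same ray hitting the same interior $x$.

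There is, however, one gap in the surjectivity step. You verify that the backward exit point $p=x-s^*(x)\theta_x$ lies on $J_u$, hence on some $S\in\Scal$, but membership $(p,t)\in D_S$ also requires $p\in E_S=S\cap\mathcal T$, i.e.\ that $p$ lie in the \emph{relative interior} of a transport ray; indeed $\theta$ and $l^+$ are only defined on $E$. The null sets you discard guarantee that $x$ is a ray-interior point, not that $p$ is. In the configuration where the $\spt(v)$-endpoint $b$ of the ray through $x$ happens to sit on $J_u$ and the entire open segment $(b,x)$ lies in $\Int(\spt(u))$, your construction returns $p=b$, a ray endpoint, and the argument breaks down. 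You still need to show that the set of such exceptional $x$ is Lebesgue null. One way: for any such $x$ the optimal map satisfies $T(x)\in[b,x)\cap\spt(v)=\{b\}$, since $(b,x)\subset\Int(\spt(u))$ is disjoint from $\spt(v)$; hence $T$ sends the entire exceptional set into the set of ray endpoints, which is null by Proposition~\ref{prop:cfm-rays}(2), and the push-forward identity $T_\#(u\,\mathcal L^3)=v\,\mathcal L^3$ then forces the exceptional set itself to be $u$-null.
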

\begin{proof} The proof is a (straightforward) adaptation of \cite[Prop. 7.9]{PeRoe09}.%
\end{proof}
Next we give a characterization of the Jacobian of $\Psi$.
\begin{proposition}\label{prop:Jac}
For $S\in\Scal$ and $\Psi$ as above, we have for almost every $(p,t)\in D_S$,
\begin{equation}\label{trJ}
	J\Psi(p,t) \,=\, \theta(p) \cdot \nu(p)\,\bigl(1+t\,\tr D\theta(p)+t^2\tr(\cof D\theta(p))\bigr)\,>\, 0.
\end{equation}
\end{proposition}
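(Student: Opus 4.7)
The plan is to compute $J\Psi(p,t)$ directly as a determinant in a convenient basis, factor it using the eigenstructure of $D\theta(p)$ supplied by Lemma~\ref{lem:evDtheta}, and verify positivity through the injectivity of the parametrization $\Psi_S$.

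First I would fix $(p,t)\in D_S$ at which the conclusions of Proposition~\ref{prop:just-para}\eqref{it:bilip} hold, which is a co-null subset of $D_S$. Choose a positively oriented orthonormal basis $\{e_1,e_2\}$ of $T_pS$ with $e_1\times e_2=\nu(p)$, together with the corresponding orthonormal basis $\{(e_1,0),(e_2,0),(0,1)\}$ of $T_{(p,t)}(S\times\R)$. Differentiating $\Psi(p',s)=p'+s\theta(p')$, the matrix of $D\Psi(p,t)$ with respect to the standard basis of $\R^3$ has columns $e_1+tD\theta(p)e_1$, $e_2+tD\theta(p)e_2$, $\theta(p)$, so
\[
J\Psi(p,t)=\bigl|\det\bigl[(I+tD\theta(p))e_1,\;(I+tD\theta(p))e_2,\;\theta(p)\bigr]\bigr|.
\]
By Lemma~\ref{lem:evDtheta} we have $D\theta(p)\theta(p)=0$, so $(I+tD\theta(p))\theta(p)=\theta(p)$; applying the multiplicativity of the determinant to $M:=I+tD\theta(p)$ gives $\det[Me_1,Me_2,\theta(p)]=\det(M)\,\det[e_1,e_2,\theta(p)]$. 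In the eigenbasis $\{v_1,v_2,\theta(p)\}$ of Lemma~\ref{lem:evDtheta}, $M$ is diagonal with entries $(1+t\lambda_1,\,1+t\lambda_2,\,1)$, hence
\[
\det(M)=(1+t\lambda_1)(1+t\lambda_2)=1+t\tr D\theta(p)+t^2\tr\bigl(\cof D\theta(p)\bigr)
\]
by \eqref{eq:evDtheta}, while the triple product identity yields $\det[e_1,e_2,\theta(p)]=\theta(p)\cdot(e_1\times e_2)=\theta(p)\cdot\nu(p)$.

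For positivity: since $(p,t)\in D_S$ forces $l^+(p)>0$, the ray from $p$ in direction $\theta(p)$ enters $\Int(\spt(u))$, whence $\theta(p)\cdot\nu(p)>0$. If the quadratic factor $(1+t\lambda_1)(1+t\lambda_2)$ had a zero at some $t_0\in(0,l^+(p))$, it would correspond to a focal point of the ray family, forcing rays emanating from points of $E_S$ near $p$ to meet the ray through $p$ at $p+t_0\theta(p)$ and violating the injectivity of $\Psi_S$ given by Proposition~\ref{prop:parametrization} (equivalently, the non-crossing property of Proposition~\ref{prop:cfm-rays}). Hence $(1+t\lambda_1)(1+t\lambda_2)>0$ throughout $D_S$, the absolute value in the displayed formula may be dropped, and both factors are strictly positive. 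The main obstacle is exactly this last step: the algebraic identity itself is short, but the strict positive sign cannot be inferred from the formula alone and must be extracted from the geometric content of the transport problem, namely the non-crossing of rays.
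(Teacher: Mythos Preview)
Your algebraic computation is essentially the paper's, and the shortcut $M\theta(p)=\theta(p)$ to factor the determinant is a clean variant of writing out the matrix in the eigenbasis.

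The difference is in the positivity step, and here you take an unnecessary detour that is not rigorous as written. You try to rule out a zero $t_0\in(0,l^+(p))$ of $(1+t\lambda_1)(1+t\lambda_2)$ by arguing that this would be a focal point forcing nearby rays to intersect, in contradiction with Proposition~\ref{prop:parametrization} or Proposition~\ref{prop:cfm-rays}. But passing from a rank-deficient \emph{approximate} differential $D\Psi(p,t_0)$ to an actual intersection of distinct transport rays requires more regularity than is available: $\theta$ is only countably Lipschitz and approximately differentiable, and Proposition~\ref{prop:parametrization} asserts injectivity only up to a Lebesgue null set, so an isolated crossing would not contradict it. As you note yourself, this step is the obstacle.

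The paper's argument is simpler and uses an ingredient you already cited. Proposition~\ref{prop:just-para}\eqref{it:bilip} says that $D\Psi(p,t)$ has full range for almost every $(p,t)\in D_S$, hence $J\Psi(p,t)\neq 0$ there. For a.e.\ fixed $p$ this holds for a.e.\ $t\in(0,l^+(p))$; since $t\mapsto (1+t\lambda_1(p))(1+t\lambda_2(p))$ is a polynomial, it is then nonzero for \emph{all} such $t$, and its value $1$ at $t=0$ forces strict positivity on the whole interval. No geometric ray-crossing argument is needed.
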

\begin{proof}
Fix $p\in S$, $t\in \R$ such that the approximate differential $D\Psi(p,t)$ exists. Choose an orthonormal basis $\{\tau_1,\tau_2\}$ of $T_pS$, and let $\{v_1,v_2,\theta\}$ be the orthonormal basis of $\R^3$ of eigenvectors of $D\theta_p$ with $\det(v_1,v_2,\theta)=1$, as in Lemma \ref{lem:evDtheta}. We then consider the orthonormal basis $\{(\tau_1,0),(\tau_2,0),(0,1)\}$ of $T_{(p,t)} (S\times \R)$ and $\{v_1,v_2,\theta\}$ of $\R^3$, and we can assume that  $\det(\tau_1,\tau_2,\nu(p))=1$. For the representation of $D\Psi(p,t): T_{(p,t)} (S\times \R)\to \R^3$ with respect to theses bases we then obtain the matrix 
\begin{align*}
	\begin{pmatrix} 
		(1+t\evla)\tau_1\cdot v_1 & (1+t\evmu)\tau_1\cdot v_2 & \tau_1\cdot\theta_p \\
		(1+t\evla)\tau_2\cdot v_1 & (1+t\evmu)\tau_2\cdot v_2 & \tau_2\cdot\theta_p \\
		0 & 0 & 1
	\end{pmatrix}
	.
\end{align*}
The Jacobian is then given by
\begin{align*}
	 J\Psi(p,t) \,&=\, \big|\bigl(1 + t(\evla+\evmu) +t^2\evla\evmu\bigr)\bigl((\tau_1\cdot v_1)(\tau_2\cdot v_2)-(\tau_1\cdot v_2)(\tau_2\cdot v_1)\bigr)\bigr|\\
	 &=\, (\theta\cdot\nu)\,\big|1 + t\tr D\theta +t^2 \tr\cof D\theta\big|,
\end{align*}
where we have used Lemma \ref{lem:evDtheta} and the fact that $(\tau_1\cdot v_1)(\tau_2\cdot v_2)-(\tau_1\cdot v_2)(\tau_2\cdot v_1)= (\tau_1\times \tau_2)\cdot (v_1\times v_2)= \nu\cdot \theta$. 

By Proposition \ref{prop:just-para} we know that $\det J\Psi(p,t) \neq 0$. On the other hand $J\Psi(p,t)$ depends continuously on $t$ and $1 + t\tr D\theta +t^2 \tr\cof D\theta=1$ for $t=0$. We therefore deduce that $1 + t\tr D\theta +t^2 \tr\cof D\theta>0$ for all $t\in (0,l^+(p))$. This concludes the proof of the Proposition.
\end{proof}
It is often more convenient to work not in {\itshape length coordinates} $t\in (L_-(p),L_+(p))$, but rather in {\itshape mass coordinates} which are defined as follows.
\begin{definition}\label{def:mass-coordinates}
For $p\in S$, $S\in \Scal$ we define a map
$\mathfrak m: S\times \R\to \R$, also written $\mathfrak{m}_p:\R\to \R$, and a map $M: S\to\R$ by
\begin{align}\label{eq:def-mass}
  \mathfrak{m}(p,t) &:=
  \begin{cases}
    \int_0^t J\Psi(p,\tau)\frac{1}{\eps}\,d\tau &\text{ if }l^+(p)>0,\\
    0 &\text{ otherwise.}
  \end{cases}\\
  \label{eq:def-Ms}
  M(p) &:= \mathfrak{m}(p,l^+(p)).
\end{align}
\end{definition}
By \eqref{trJ} and Lemma \ref{lem:evDtheta} we have
\begin{align}
	\mathfrak{m}(p,t) \,&=\, \theta(p) \cdot \nu(p)\frac{1}{\eps}\bigg(t+\frac{t^2}{2}(\evla(p)+\evmu(p))+\frac{t^3}{3}\evla(p)\evmu(p)\bigg).\label{eq:def-mass2}
\end{align}
We obtain next that the change of variables between length- and mass-coordinates is one-to-one. We also give an important estimate for the inverse.
\begin{proposition}\label{prop-mass}
Fix $p\in S$ with $l^+(p)>0$. Then the map $\mathfrak{m}_p$ is strictly monotone on $\R$. The inverse $\mathfrak t_p$ satisfies for all $m \in \R$
\begin{equation}\label{lemmatec}
	\te_p(m)-\te_p(-m) \,\geq\, \frac{2\e}{\theta(p)\cdot \nu(p)}m+\frac{4\e^3Q(D\theta(p))}{(\theta(p)\cdot \nu(p))^3}m^3.
\end{equation}
\end{proposition}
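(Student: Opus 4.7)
The plan is to prove strict monotonicity and the quantitative inequality in turn.

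\textbf{Monotonicity.} By \eqref{eq:def-mass} and \eqref{trJ} the derivative is
\begin{equation*}
\mathfrak{m}_p'(t) \,=\, \frac{J\Psi(p,t)}{\e} \,=\, \frac{\theta(p)\cdot\nu(p)}{\e}\bigl(1+\evla(p) t\bigr)\bigl(1+\evmu(p) t\bigr),
\end{equation*}
which is strictly positive on $(0,l^+(p))$ by Proposition \ref{prop:Jac}. Since the quadratic factor equals $1$ at $t=0$ and is continuous, it remains strictly positive on a maximal interval of $\R$ containing $0$; on this interval $\mathfrak{m}_p$ is strictly increasing and $\te_p := \mathfrak{m}_p^{-1}$ is well defined.

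\textbf{Setup and leading-order computation.} Set $c := (\theta\cdot\nu)/\e$, $\mu := m/c$, $\alpha := \evla+\evmu$, $\beta := \evla\evmu$, and $P(t) := t + \alpha t^2/2 + \beta t^3/3$, so that $\mathfrak{m}_p = cP$ and, with $t_\pm := P^{-1}(\pm\mu)$, $\bar t := (t_++t_-)/2$, $\delta := t_+-t_-$, and $4Q = \alpha^2 - 2\beta/3$, the inequality \eqref{lemmatec} becomes $\delta \geq 2\mu + 4Q\mu^3$. Because $P$ is a cubic polynomial, Taylor's formula at $\bar t$ is \emph{exact}:
\begin{equation*}
P(t_\pm) \,=\, P(\bar t) \pm \tfrac12 P'(\bar t)\,\delta + \tfrac18 P''(\bar t)\,\delta^2 \pm \tfrac{\beta}{24}\,\delta^3.
\end{equation*}
Adding the $\pm$ identities and using $P(t_+)+P(t_-)=0$ yields the \emph{sum equation} $P(\bar t) + (\alpha+2\beta\bar t)\delta^2/8 = 0$, which implicitly determines $\bar t = -\alpha\delta^2/8 + O(\delta^4)$. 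Subtracting and using $P(t_+)-P(t_-)=2\mu$ yields the \emph{difference equation} $2\mu = \delta P'(\bar t) + \beta\delta^3/12$, which rearranges to
\begin{equation*}
\delta - 2\mu \,=\, -\delta\bigl(\alpha\bar t + \beta\bar t^2\bigr) - \tfrac{\beta}{12}\delta^3.
\end{equation*}
Substituting $\bar t = -\alpha\delta^2/8 + O(\delta^4)$ and $\delta = 2\mu + O(\mu^3)$ one finds $\delta - 2\mu = 4Q\mu^3 + O(\mu^5)$, matching the target at leading order.

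\textbf{Main obstacle.} Promoting this leading-order match to the genuine pointwise inequality is the algebraic core of the proof. The plan is to use the sum equation to eliminate $\bar t$ in favour of $\delta$, reducing the problem to a polynomial inequality in $\delta$ and $\mu$ that must be verified using the two natural constraints on $\alpha,\beta$: the nonnegativity $Q\geq 0$ from Lemma \ref{lem:evDtheta}, and the discriminant bound $\alpha^2\geq 4\beta$ coming from the reality of the eigenvalues $\evla,\evmu$. An equivalent route is to regard $\delta$ as an odd analytic function of $\mu$ and check that all its odd Taylor coefficients beyond order three are nonnegative under these two constraints; the coefficients at orders $\mu$ and $\mu^3$ are, by the computation above, exactly $2$ and $4Q$. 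This is a more involved analogue of the two-dimensional argument in \cite{PeRoe09}, where $P$ is merely quadratic and $\delta$ can be written down in closed form via the quadratic formula.
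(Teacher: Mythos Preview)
Your proposal is incomplete: you correctly identify the leading-order behaviour $\delta-2\mu=4Q\mu^3+O(\mu^5)$, but you do not prove the pointwise inequality. You explicitly flag the ``main obstacle'' and sketch two possible routes---eliminating $\bar t$ to obtain a polynomial inequality, or verifying nonnegativity of all higher odd Taylor coefficients of $\delta(\mu)$---but neither is carried out. Both would require nontrivial work: the sum equation is cubic in $\bar t$, so elimination is messy, and the coefficient-by-coefficient check involves infinitely many terms with no obvious positivity pattern.

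The paper bypasses this entirely with a convexity trick you are missing. Working directly with $\te_p$ rather than with $P$, one computes $\te_p',\dots,\te_p^{(5)}$ via the chain rule in terms of $\me_p',\me_p'',\me_p'''$. A direct algebraic manipulation (substituting $\xi=\evla(1+t\evmu)$, $\eta=\evmu(1+t\evla)$) shows that $\te_p^{(5)}\geq 0$ everywhere the derivative $\me_p'$ is positive, so $\te_p'''$ is convex. Taylor's formula with integral remainder then gives
\[
\te_p(m)-\te_p(-m)\,=\,2\te_p'(0)\,m+\frac12\int_0^m (m-r)^2\bigl(\te_p'''(r)+\te_p'''(-r)\bigr)\,dr,
\]
and convexity yields $\tfrac12(\te_p'''(r)+\te_p'''(-r))\geq \te_p'''(0)$. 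Integrating and inserting the explicit values $\te_p'(0)=\e/(\theta\cdot\nu)$ and $\te_p'''(0)=\e^3(3\alpha^2-2\beta)/(\theta\cdot\nu)^3=12\e^3Q/(\theta\cdot\nu)^3$ gives \eqref{lemmatec} immediately. This single convexity observation replaces the infinite family of inequalities your second route would require, and does not need the discriminant constraint $\alpha^2\geq 4\beta$ at all.
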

\begin{proof}
In what follows for the sake of simplicity we drop the dependence on $p$. By a straightforward computation we get
\begin{align}\label{dert1}
	\te'(m) \,&=\, (\me'(\te(m)))^{-1},\\
	\label{dert2}
	\te''(m) \,&=\, -(\me'(\te(m)))^{-3}\me''(\te(m)),\\
	\label{dert3}
	\te'''(m) \,&=\, 3(\me'(\te(m)))^{-5}(\me''(\te(m)))^2-(\me'(\te(m)))^{-4}\me'''(\te(m)),\\
	\label{dert4}
	\te^{\rm iv}(m) \,&=\, -15(\me'(\te(m)))^{-7}(\me''(\te(m)))^3+10(\me'(\te(m)))^{-6}\me''(\te(m))\me'''(\te(m)),\\
	\label{dert5}
	\te^{\rm v}(m) \,&=\, 5(\me'(\te(m)))^{-9}(21(\me''(\te(m)))^4-21(\me''(\te(m)))^2\me'''(\te(m))\me'(\te(m))\\
			&\qquad +2(\me'(\te(m)))^2(\me'''(\te(m)))^2).
\end{align}
Therefore, using \eqref{dert1} and the  definition of $\me$ we deduce that
\begin{align}
	\te(0)&=0, \quad \te'(0)=\frac{\eps}{\theta \cdot \nu},\quad
	\te''(0) =-\eps^2 \frac{\evla+\evmu}{(\theta\cdot\nu)^2},\quad 
	\te'''(0) = \eps^3(\theta\cdot\nu)^3\big(3(\evla+\evmu)^2-2\evla\evmu\big). \label{eq:dev-t-0}
\end{align}
We further claim that
\begin{equation}\label{convder3}
	r \mapsto \te'''(r) \quad \textrm{is convex.} 
\end{equation}
In fact we check that $\te^{\rm v}(m) \geq 0$ for each $m$. Since $\me'>0$ by Proposition \ref{trJ} we find that this is equivalent to
\begin{align*}
	0\,&\leq\, 21\big(\evla(1+t\evmu)+\evmu(1+t\evla)\big)^4 - 21\big(\evla(1+t\evla)+\evmu(1+t\evla)\big)^22\evla\evmu(1+t\evla)(1+t\evla)\\
	&\qquad\qquad + 4\evla^2\evmu^2(1+t\evla)^2(1+t\evla)^2 \\
	&=\, 21(\xi +\eta)^4 -42\xi\eta(\xi+\eta)^2 + 8\xi^2\eta^2\\
	&=\, 21(\xi^2+\eta^2)(\xi +\eta)^2 + 8\xi^2\eta^2,
\end{align*}
where we have substituted $\xi =\evla(1+t\evmu), \eta=\evmu(1+t\evla)$ and which shows \eqref{convder3}.
%\meta{Strange enough: $t'''(r)=Q(\xi,\eta)$... I do not know what this means...}

Using Taylor's formula we therefore obtain
\begin{align}
	\te(m) - \te(-m) \,&=\,  \frac{\e m}{\theta \cdot \nu}-\frac{\e^2(\evla+\evmu)m^2}{2(\theta \cdot \nu)^2}+\frac{1}{2}\int_0^m(m-r)^2\te'''(r)\,dr - \notag\\
&\qquad - \Big(-\frac{\e m}{\theta \cdot \nu}-\frac{\e^2(\evla+\evmu)m^2}{2(\theta \cdot \nu)^2}-\frac{1}{2}\int_0^m(m-r)^2\te'''(-r)\,dr\Big) \notag\\
	&=\,\frac{2\e m}{\theta \cdot \nu}+\frac{1}{2}\int_0^m(m-r)^2(\te'''(r)+\te'''(-r))\,dr. \label{concl1}
\end{align}

Next \eqref{convder3} implies, using also \eqref{dert3},
$$
	\frac{1}{2}(\te'''(r)+\te'''(-r))\geq \te'''(0)=\frac{\e^3(3(\evla+\evmu)^2-2\evla\evmu)}{(\theta \cdot \nu)^3}
$$
and then \eqref{concl1} easily gives
$$
	\te(m)-\te(-m)\geq\frac{2\e m}{\theta \cdot \nu}+\frac{\e^3((\evla+\evmu)^2-\frac{2}{3}\evla\evmu)m^3}{(\theta \cdot \nu)^3}=\frac{2\e m}{\theta \cdot \nu}+\frac{4\e^3Q(D\theta)m^3}{(\theta \cdot \nu)^3}
$$
which is what we wanted to prove.
\end{proof}

We finally obtain the following two transformation formulae.
\begin{proposition}\label{prop:trafo-formula}
Let $\Scal$ and $D_S$, $S\in\Scal$ be as above. 
Then for all $g\in L^1(\R^3)$,
\begin{align}\label{eq:trafo}
  \int {}&g(x)u(x)\,dx \notag\\
  =\,&
  \sum_{S\in\Scal} \int_{D_S}\int_0^{l^+(p)} g(\Psi_S(p,t))\theta(p) \cdot \nu(p)\big(1+t\,\tr D\theta(p)+t^2\tr (\cof D\theta(p))\big)\,dt\,d\Ha^2(p)\\
    =\, &\sum_{S\in\Scal} \int_S \int_0^{M_S(p)} g(\Psi_S(p,\te_S(p,m)))\,dm\,d\Ha^2(p)	\label{eq:trafo2}
\end{align}
holds.
In particular, the total mass of $u$ is given by
\begin{align}
  \int u(x)\,dx \,&=\,
    \sum_{S\in\Scal} \int_S M_S(p)\,d\Ha^2(p) .\label{eq:tot-mass}
\end{align}
\end{proposition}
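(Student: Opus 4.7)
The plan is to apply the area formula on each surface $S\in\Scal$, exploiting the essentially bijective parametrization $\Psi_S:D_S\to\R^3$ from Proposition~\ref{prop:parametrization} together with the explicit Jacobian from Proposition~\ref{prop:Jac}, and then to pass from length to mass coordinates via the strict monotonicity of $\mathfrak{m}_p$ established in Proposition~\ref{prop-mass}.

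For the first identity \eqref{eq:trafo}, I would use that $u=\eps^{-1}$ on $\spt(u)$ and invoke Proposition~\ref{prop:parametrization} to decompose $\spt(u)$, up to a Lebesgue null set, as the essentially disjoint union $\bigcup_{S\in\Scal} \Psi_S(D_S)$ with each $\Psi_S$ essentially one-to-one. Proposition~\ref{prop:just-para}\,\eqref{it:bilip} supplies the approximate differentiability of $\Psi_S$ together with non-degeneracy of $D\Psi_S$ almost everywhere on $D_S$, so the classical area formula for approximately differentiable maps applies; inserting the Jacobian from Proposition~\ref{prop:Jac} and summing over $S\in\Scal$ yields \eqref{eq:trafo}.

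For the second identity \eqref{eq:trafo2}, I would perform, for each fixed $p\in S$ with $l^+(p)>0$, a one-dimensional change of variables $m=\mathfrak{m}_p(t)$ in the inner integral. From Definition~\ref{def:mass-coordinates} one has $\mathfrak{m}_p'(t) = J\Psi_S(p,t)/\eps$, and Proposition~\ref{prop-mass} ensures that $\mathfrak{m}_p$ is a monotone bijection from $[0,l^+(p)]$ onto $[0,M_S(p)]$ with inverse $\mathfrak{t}_p$; the factor $(\theta\cdot\nu)(1+t\,\tr D\theta+t^2\,\tr \cof D\theta)\,dt$ thus transforms cleanly under this substitution into a multiple of $dm$. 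The total-mass identity \eqref{eq:tot-mass} then follows at once by setting $g\equiv 1$ in \eqref{eq:trafo2} together with the mass constraint $\int u\,dx = M_T$ built into $\K_\e$.

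The main obstacles — countable Lipschitz regularity of the direction field $\theta$, pointwise positivity of $J\Psi_S$ throughout the whole interval $[0,l^+(p)]$, and essential injectivity of the ray parametrization — have already been settled in the preceding propositions, so no genuinely new difficulty arises here. The only issue that requires some care is the bookkeeping of the $\eps$-factor introduced through Definition~\ref{def:mass-coordinates}, which is responsible for the difference in appearance between the length-coordinate and mass-coordinate versions of the transformation formula.
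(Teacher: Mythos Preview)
Your proposal is correct and follows essentially the same route as the paper: the paper invokes the generalized transformation formula from \cite[Remark 5.5.2]{AmGS05} (i.e.\ the area formula for approximately differentiable maps) on each $D_S$, sums using Proposition~\ref{prop:parametrization}, and then observes that $\partial_t\mathfrak{m}(p,t)=\frac1\eps J\Psi(p,t)$ to pass to mass coordinates. Your remark about the $\eps$-bookkeeping is apt, since the factor $\frac1\eps$ coming from $u=\eps^{-1}\chi_{\spt(u)}$ is precisely what is absorbed in the substitution $dm=\frac1\eps J\Psi(p,t)\,dt$.
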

\begin{proof}
We deduce from the generalized transformation formula~\cite[Remark 5.5.2]{AmGS05} that
\begin{align*}
  \int_{D_S} &g(\Psi_S(p,t))\frac{1}{\eps}\theta(p) \cdot \nu(p)\big(1+t\,\tr D\theta(p)+t^2\tr (\cof D\theta(p))\big)\,dt\,d\Ha^2(p)\\
  &=
  \int_{\Psi_S(D_S)} g(x)u(x)\,dx. 
\end{align*}
Summing these equalities over $S\in\Scal$ we deduce by Proposition
\ref{prop:parametrization} that \eqref{eq:trafo} holds. Since
\begin{align*}
	\frac1\e\theta(p) \cdot \nu(p)\big(1+t\,\tr D\theta(p)+t^2\tr (\cof D\theta(p))\big)
	\,=\, \partial_t \mathfrak{m}(p,t)
\end{align*}
\eqref{eq:trafo2} follows.
\end{proof}

As in the two-dimensional case \cite{PeRoe09} we can decompose the
optimal mass transport problem of transporting $u$ to $v$ into
one-dimensional mass transport problems on single rays, which allows for an explicit characterization of the unique monotone optimal transport. We therefore deduce the following estimate of the transport map.
\begin{lemma}\label{lem:eff-mass-balance}
Let $S\in\Scal$.
Then for $\Ha^2$-almost all $p\in E$ with $l^+(p)>0$ we
obtain that
\begin{gather}
  \big|\Psi\big(p,\te(p,m)\big)-
  T\big(\Psi\big(p,\te(p,m)\big)\big)\big|
  \,\geq\, \te(p,m)-\te(p,m-M(p))
  \label{eq:esti-t-diffs} 
\end{gather}
for all $0<m<M(p)$.
\end{lemma}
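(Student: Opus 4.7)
The plan is to reduce the three-dimensional mass transport to a one-dimensional balance on the single ray through $p$. By Theorem \ref{thm:opt-transport}(5) and the ray structure (Proposition \ref{prop:cfm-rays}), the image of $x := \Psi(p, \te(p,m))$ under $T$ lies on the same ray, so $T(x) = \Psi(p, s)$ for some $s \leq 0$, and the Euclidean distance collapses to $|x - T(x)| = \te(p,m) - s$. The inequality \eqref{eq:esti-t-diffs} is therefore equivalent to $s \leq \te(p, m - M(p))$.

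First, I would extend the parametrization $\Psi(p,\cdot)$ to the $v$-side of $p$. Since rays do not cross in their interior (Proposition \ref{prop:cfm-rays}(1)) and $J\Psi(p,\tau)$ is a polynomial in $\tau$ with $J\Psi(p,0) = \theta(p) \cdot \nu(p) > 0$, the reasoning of Proposition \ref{prop:Jac} shows that $J\Psi(p, \cdot) > 0$ on the full open interval $(L^-(p), L^+(p))$. Consequently $\me(p, \cdot)$ and its inverse $\te(p, \cdot)$ extend as strictly monotone functions to this interval, and the change-of-variable formula of Proposition \ref{prop:trafo-formula} applies equally on the $v$-side.

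Next I would exploit the monotone structure of the transport on each ray: by Theorem \ref{thm:opt-transport}(5), $T$ restricted to the ray through $p$ realizes the unique monotone coupling between the one-dimensional marginals of $u$ and $v$ on that ray. Matching top quantiles gives that the $v$-mass on the ray in the interval $[s, 0]$ equals the $u$-mass on the ray in $[\te(p,m), l^+(p)]$, which is $M(p) - m$ by definition of $\te$. Combining this balance with the pointwise bound $v \leq 1/\eps$ and the change-of-variable formula yields
\begin{equation*}
M(p) - m \,=\, \int_s^0 v(\Psi(p,\tau))\, J\Psi(p,\tau)\,d\tau \,\leq\, \frac{1}{\eps}\int_s^0 J\Psi(p,\tau)\,d\tau \,=\, -\me(p, s),
\end{equation*}
so that $\me(p,s) \leq m - M(p)$. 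Strict monotonicity of $\me(p, \cdot)$ then forces $s \leq \te(p, m - M(p))$, which is the desired inequality.

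The main obstacle is the rigorous justification of the one-dimensional disintegration for $\Ha^2$-almost every $p$: one must identify the marginals of $u$ and $v$ on the ray through $p$ with the densities $\tau \mapsto u(\Psi(p,\tau)) J\Psi(p,\tau)$ and $\tau \mapsto v(\Psi(p,\tau)) J\Psi(p,\tau)$, show that they share total mass $M(p)$, and conclude that the monotone optimal transport restricts to the monotone $1$D coupling on almost every ray. This is precisely the step where the two-dimensional argument of \cite{PeRoe09} needs to be adapted to three dimensions, relying on the Lipschitz regularity of $\theta$ from Proposition \ref{prop:just-para}(\ref{it:lip}) together with a standard disintegration along the rays of the Kantorovich potential.
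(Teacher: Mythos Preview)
Your approach is the same as the paper's: reduce to a one-dimensional transport along each ray, identify the induced map with the monotone coupling of the ray-marginals $df_p^+:=u(\Psi(p,\te(p,m)))\,dm$ and $df_p^-:=v(\Psi(p,\te(p,m)))\,dm$ (the paper defers to \cite[Prop.~7.15]{PeRoe09} for this step), and then derive \eqref{eq:esti-t-diffs} from that coupling together with the bound $v\leq 1/\eps$ (the paper defers to \cite[Lemma~7.16]{PeRoe09}). So the paper's own proof is the same outline you give, with the details outsourced to the two-dimensional paper.

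There is, however, one genuine slip in your explicit argument, though it is easily repaired. Your claim that the $v$-mass on $[s,0]$ \emph{equals} the $u$-mass on $[\te(p,m),l^+(p)]$ is not correct in general: a ray may re-enter $\spt(u)$ beyond $l^+(p)$ (the ``non-generic situation'' of Figure~\ref{fig:para2}), and that extra $u$-mass is also transported downward, possibly landing in $[s,0]$. For the same reason your assertion that the ray-marginals ``share total mass $M(p)$'' is wrong---the total $u$-mass on the ray may strictly exceed $M(p)$; what is true (and what the disintegration requires) is only that $f_p^+$ and $f_p^-$ have \emph{equal} total mass. What the monotone coupling actually gives is that the induced one-dimensional map $\hat T$ sends $(m,M(p))$ into $(\me(p,s),0]$, hence
\[
  M(p)-m \,=\, \eps\, f_p^+\big((m,M(p))\big) \,\leq\, \eps\, f_p^-\big((\me(p,s),0]\big) \,=\, \int_s^0 v(\Psi(p,\tau))\,J\Psi(p,\tau)\,d\tau,
\]
an inequality rather than an equality. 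Since this is exactly the direction you need, your displayed chain remains valid once the first ``$=$'' is replaced by ``$\leq$'', and the conclusion $\me(p,s)\leq m-M(p)$ follows as before.
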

\begin{proof}
We associate to
the optimal transport map $T$ from $u$ to $v$ and to $p\in E_S$ with $l^+(p)>0$ an interval $I(p)\subset\R$ and
measures $f_p^+,f_p^-$ on $I(p)$,
\begin{align*}
  I(p) &:=
  \Big(\mathfrak{m}_p\big(L^-(p)\big),
  \mathfrak{m}_p\big(L^+(p)\big)\Big),\\  
  df_p^+ &:= u\big(\Psi(p,\te(p,m))\big)\,dm,\qquad
  df_p^- := v\big(\Psi(p,\te(p,m))\big)\,dm.
\end{align*}
We define a map
\begin{eqnarray*}
  \hat{T}:I(p) \cap\spt(f_p^+) &\to& I(p)\cap\spt(f_p^-),
\end{eqnarray*}
by the equation 
\begin{gather}\label{eq:def-s-hat}
  T\big(\Psi(p,\te(p,m))\big) \,=\,
  \Psi\big(p,\te(p,\hat{T}(m))\big)\quad\text{ for }m\in I\cap\spt(f_p^+).
\end{gather}
Then one shows as in \cite[Prop. 7.15]{PeRoe09} that for $\Ha^2$-almost all $p\in E$ with $l^+(p)>0$
the map $\hat{T}$
is the unique monotone transport map pushing $f_p^+$ forward to $f_p^-$. Following the arguments in \cite[Lemma 7.16]{PeRoe09} we then deduce the estimate \eqref{eq:esti-t-diffs}.
\end{proof}

\begin{proof}[Proof of Theorem \ref{main1}]
We are now ready to prove the lower bound estimate and Theorem \ref{main1}. Let $T$ be the optimal transport map and $\phi\in \Lip_1(\R^3)$
be an 
optimal Kantorovich potential for the mass transport from $u$ to
$v$ as in Theorem \ref{thm:opt-transport}. By \eqref{eq:def-l+} and by \eqref{eq:def-Ms} we have $\nu\cdot\theta>0$ on $\{M>0\}$. Moreover, \eqref{eq:tot-mass} proves \eqref{eq:eq-mass}. It thus remains to prove the lower bound \eqref{estfond}.

Using Proposition \ref{prop:trafo-formula} and \eqref{eq:trafo2}
we can rewrite $d_1(u,v)$ as follows:
\begin{align}
	d_1(u,v) \,&=\, \int_{\R^3}|z-T(z)|u(z)\,dz \notag\\
	&=\,\sum_{S\in\Scal} \int_S\int_0^{M_S(p)}|\Psi_S(p,\mathfrak t_p(m))-T(\Psi_S(p,\mathfrak t_p(m)))|\,dm\,d\mathcal H^2(p). \label{eq:d1-1}
\end{align}
Next we fix $S\in\Scal$, drop for the moment the index $S$, and use Proposition \ref{prop-mass} and Lemma~\ref{lem:eff-mass-balance} to compute that
\begin{align}
	\int_S\int_0^{M(p)}&|\Psi(p,\mathfrak t_p(m))-T(\Psi(p,\mathfrak t_p(m)))|\,dm\,d\mathcal H^2(p)\notag\\
	\geq\,& \int_S \int_0^{M(p)}(\mathfrak t_p(m)-\mathfrak t_p(-m))\,dm\,d\mathcal H^2(p) \notag\\
	\geq \,&\int_S \int_0^{M(p)} \bigg(\frac{2\e}{\theta(p) \cdot \nu(p)}m+\frac{4\e^3Q(D\theta(p))}{(\theta(p) \cdot \nu(p))^3}m^3\bigg)\,dm\,d\mathcal H^2(p) \notag\\
	=\,&\int_S \bigg(\e\frac{M^2(p)}{\theta(p) \cdot \nu(p)}+\e^3\frac{M^4(p)}{(\theta(p) \cdot \nu(p))^3}Q(D\theta(p))\bigg)\,d\mathcal H^2(p). \label{eq:d1-2}
\end{align}
This implies
\begin{align}
	\mathcal G_\e(u,v) \,&=\, \sum_{S\in\Scal}\frac{1}{\e^2}\int_S 1\, d\Ha^2(p)+\frac{1}{\e^3}d_1(u,v)-\frac{2M_T}{\e^2} \notag\\
	&=\, \sum_{S\in\Scal}\frac{1}{\e^2}\int_S \big(1 - 2M(p)\big) d\Ha^2(p)+\frac{1}{\e^3}d_1(u,v) \notag\\
	&\geq\, \sum_{S\in\Scal}\frac{1}{\e^2}\Big( \int_S(M(p)-1)^2\,d\Ha^2(p)-\int_S {M(p)^2} \,d\mathcal H^2(p)\Big) +\frac{1}{\e^3}d_1(u,v)\notag\\
	&\geq\, \sum_{S\in\Scal}\Big[\frac{1}{\e^2}\int_S(M(p)-1)^2\,d\Ha^2(p)+\frac{1}{\e^2}\int_S\bigg(\frac{1}{\theta(p) \cdot \nu(p)}-1\bigg)M^2(p)\,d\Ha^2(p) + \notag\\
	&\qquad+\int_S\bigg(\frac{M^4(p)}{(\theta(p) \cdot \nu(p))^3}Q(D\theta(p))\bigg)\,d\Ha^2(p)\Big],
\end{align}
which gives \eqref{estfond}.
\end{proof}

%
%=======================================
% limsup
% ==========================================
\section{Construction of a recovery sequence}
In this section we prove Theorem \ref{thm:limsup}. We therefore consider a fixed smooth compact orientable hypersurface $S$ without boundary. By a rescaling argument we can  restrict ourselves to the case $M_T=1$, and in particular we assume that $\Ha^2(S)=\frac{1}{2}$. As in the two-dimensional case \cite{PeRoe09} the idea is to choose the boundaries of the supports of $u_\eps$ and of $v_\eps$, respectively, as  surfaces parallel to $S$, and to define a transport map with transport rays that are in the normal direction of the given surface $S$. However, the construction is more difficult in three dimensions, and  in particular we can not choose the mass functions constant $\mathfrak{m}_p(\cdot,\eps)=1$ but need some higher order corrections in $\eps$. Furthermore, in two dimensions the construction was described in terms of the parallel curves of distance $\pm\eps$ from $S$ (characterizing the boundary of the $u_\eps$ layer) and sharp estimates were achieved for both curves separately. In three dimensions we describe the construction completely in terms of $S$ and use the cancellation of contributions from the transport through the two surfaces parallel to $S$.

\subsection{General construction}
  
Consider for $\eps>0$ sufficiently small real numbers $\ell_\pm(\e)$ and smooth maps $L_\pm(\e,\cdot)$ on $S$, to be chosen later, with
\begin{align*}
	L_-(\eps,\cdot) \,<\, \ell_-(\eps)\,<\,0\,<\,\ell_+(\eps)\,<\,L_+(\eps,\cdot),
\end{align*}
with $L_\pm(\eps,\cdot),\ell_\pm(\eps)\to 0$ as $\e\to 0^+$. 

Let $\nu$ be a choice of a smooth unit normal vector field on $S$ and consider the mapping
\begin{gather}
  \Psi: S\times (-\delta_0,\delta_0) \,\to\,\R^2,\quad
    \Psi(p,t) \,:=\, p + t\nu(p). \label{eq:def-psi-limsup}
\end{gather}
For $\delta_0$ sufficiently small this defines a smooth parametrization of the $\delta_0$-tubular neighbourhood of $S$. We then define,
for $|L_\pm(\cdot,\eps)|<\delta_0$, the sets 
\begin{align*}
	U_\e \,&:=\, \Psi \big(\big\{(p,t)\,:\, p\in S,\, t\in (\ell_-(\eps),\ell_+(\eps))\big\}\big),\\
    V_\eps\,&:=\, \Psi \big(\big\{(p,t)\,:\, p\in S,\, t\in (L_-(p,\eps),\ell_-(\eps))\cup(\ell_+(\eps),L_+(p,\eps))\big\}\big)
\end{align*}
\begin{figure}
\centering
\includegraphics[width=0.65\textwidth]{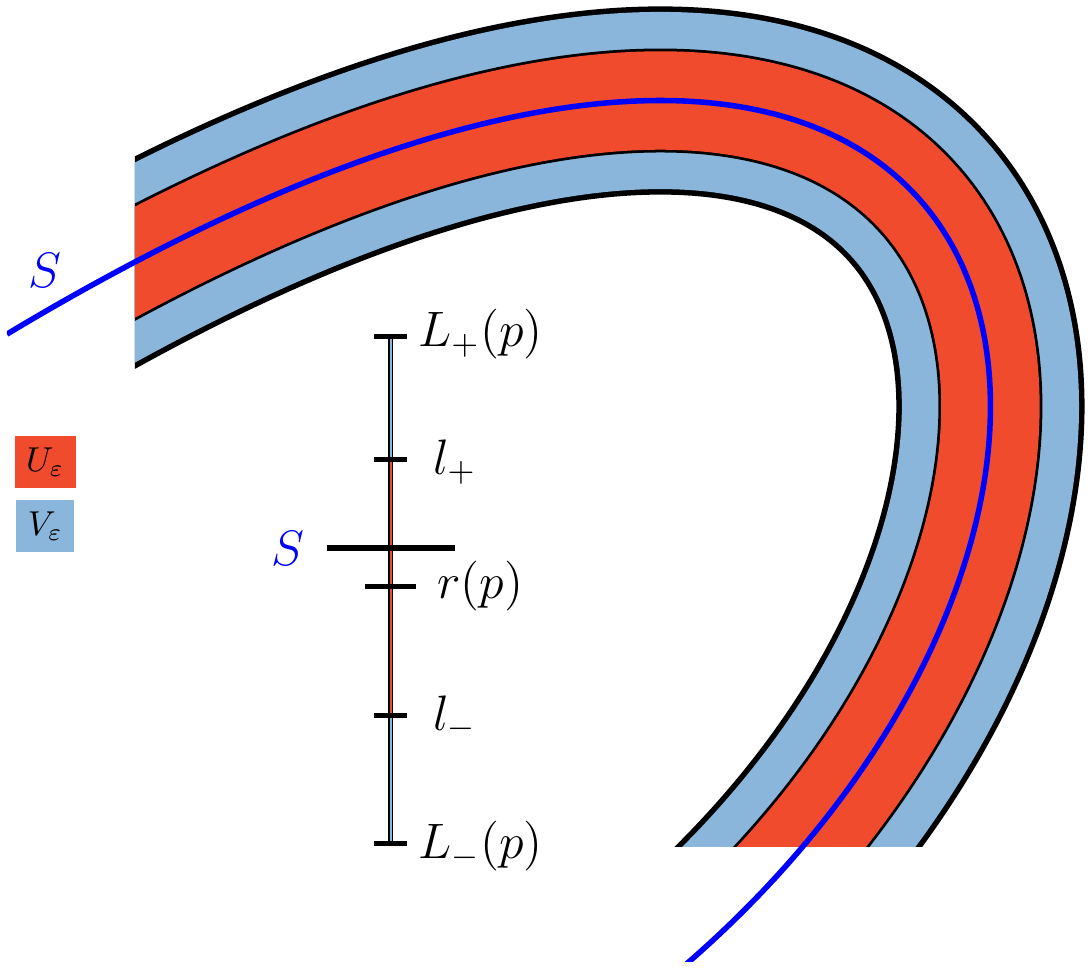}
\caption{Bilayer construction}
\label{fig:limsup}
\end{figure}
and corresponding rescaled density functions  
$$
	u_\e:=\frac{1}{\e}\chi_{U_\e}, \quad v_\e:=\frac{1}{\e}\chi_{V_\e}.
$$
Since for $\eps>0$ small enough $S$, $\nu$, and $L_\pm(\cdot,\eps)$ are smooth, and $\ell_\pm(\eps,\cdot)$ are constant, we obtain that 
$$
	(u_\e,v_\e)\in BV(\R^3;\{0,1/\e\}) \times L^1(\R^3;\{0,1/\e\})
$$
and that $u_\e v_\e=0$ almost everywhere in $\R^3$.

Note that the parametrization \eqref{eq:def-psi-limsup} is analogous to \eqref{eq:def-psi} (with $\theta$ replaced by $\nu$), but that the `reference surface' $\Psi(\{t=0\})$ does not describe the boundary of the $u_\eps$ support. Therefore also the roles of $\ell_\pm$ and $L_\pm$ are different from those in the previous section. Nevertheless we can introduce mass coordinates $\mathfrak{m}_p:\R\to \R$, $p\in S$, as in \eqref{eq:def-mass}, and obtain by analogous calculations as before
\begin{align}\label{eq:def-mass-limsup}
  \mathfrak{m}(p,t) &:=
   \frac{1}{\eps}\bigg(t+\frac{t^2}{2}H(p)+\frac{t^3}{3}K(p)\bigg),\end{align}
where $H$ and $K$ are the scalar total and Gaussian curvature of $S$ (with respect to $\nu$).
In particular we can express the total mass constraint for $u_\eps$ as 
\begin{align}
	1\,&=\, \int_{\R^3} u_\eps \,=\, \int_S \big(\mathfrak{m}_p(\ell_+(\e))-\mathfrak{m}_p(\ell_-(\e))\big)\,d\mathcal H^2(p), \notag\\
	&=\, \frac{1}{2\eps}(\ell_+(\eps)-\ell_-(\eps)) +\frac{1}{2\eps}(\ell_+(\eps)^2-\ell_-(\eps)^2)\int_S H \,d\Ha^2 + \frac{1}{3\eps}(\ell_+(\eps)^3-\ell_-(\eps)^3)\int_S K \,d\Ha^2. \label{cond1}
\end{align}
In order to also satisfy the mass constraint for $v_\eps$ we require
\begin{equation}\label{cond2}
	\mathfrak{m}_p(L_+(\e,p))-\mathfrak{m}_p(L_-(\e,p))=2(\mathfrak{m}_p(\ell_+(\e))-\mathfrak{m}_p(\ell_-	(\e))) \quad \forall p\in S,
\end{equation}
which then implies together with \eqref{cond1}
\begin{align*}
	\int_{\R^3} v_\eps \,&=\, \int_S \big( \mathfrak{m}_p(L_+(\e,p))-\mathfrak{m}_p(\ell_+(\e)) + \mathfrak{m}_p(\ell_-(\e))-\mathfrak{m}_p(L_-(\e,p))\big)\,d\Ha^2(p)\\
	&=\,  \int_S \big(\mathfrak{m}_p(\ell_+(\e))-\mathfrak{m}_p(\ell_-(\e))\big)\,d\mathcal H^2(p)\,=\, 1.
\end{align*}
Thus, $(u_\e,v_\e)\in\mathcal K_\e$ if and only if \eqref{cond1}, \eqref{cond2} hold.

This directly gives an estimate for the surface term of the energy.
Since $J_{u_\eps}\,=\, \{\Psi(p,\ell_+(\eps))\,:\, p\in S\}\cup \{\Psi(p,\ell_-(\eps))\,:\, p\in S\}$ and since we have for the Jacobian of $\Psi$ that $J\Psi(p,t)\,=\, 1+ H(p)t + K(p)t^2$, we deduce that
\begin{equation}\label{eq:Ju}
	\mathcal H^2(J_{u_\e}) \,=\, 1+(\ell_+(\e)+\ell_-(\e))\int_SH\,d\mathcal H^2+(\ell_+^2(\e)+\ell_-^2(\e))\int_SK\,d\mathcal H^2.
\end{equation}

\subsection{One-dimensional transport along orthogonal rays}
We next turn to an estimate for the distance term and first define on each orthogonal ray a point $p+r_{p,\e}\nu(p)$ that separates the part of the ray where mass is transported in the positive direction from that where mass is transported in the negative direction of $\nu(p)$. We observe that 
\begin{align*}
	\mathfrak{m}_p(\ell_-(\eps)) - \big(2\mathfrak{m}_p(\ell_+(\eps))-\mathfrak{m}_p(L_+(\eps,p))\big)\,=\, -\mathfrak{m}_p(\ell_-(\eps)) + \mathfrak{m}_p(L_-(\eps,p)) \,&<\, 0,\\
	\mathfrak{m}_p(\ell_+(\eps)) - \big(2\mathfrak{m}_p(\ell_+(\eps))-\mathfrak{m}_p(L_+(\eps,p))\big)\,=\, -\mathfrak{m}_p(L_+(\eps,p))-\mathfrak{m}_p(\ell_+(\eps))  \,&>\, 0.
\end{align*}
We therefore deduce that there is a unique $r_{p,\e}\in (\ell_-(\e),\ell_+(\e))$ such that
\begin{equation}\label{eq:rp}
	\mathfrak{m}_p(L_+(\e,p))-\mathfrak{m}_p(\ell_+(\e)) \,=\, \mathfrak{m}_p(\ell_+(\e))-\mathfrak{m}_p(r_{p,\e})
\end{equation}
is satisfied. 
As in Proposition \ref{prop-mass} we observe that $\mathfrak{m}_p$ is invertible, with inverse denoted by $\mathfrak{t}_p$. Therefore, $r_{p,\e}$ is given by
$$
	r_{p,\e} \,=\, \mathfrak{t}_p(2\mathfrak{m}_p(\ell_+(\e))-\mathfrak{m}_p(L_+(\e,p))).
$$
Together with \eqref{cond2} we deduce from \eqref{eq:rp} that
\begin{equation}\label{eq:rp1}
\mathfrak{m}_p(r_{p,\e})-\mathfrak{m}_p(\ell_-(\e)) \,=\, \mathfrak{m}_p(\ell_-(\e))-\mathfrak{m}_p(L_-(\e,p)).
\end{equation}
For each $p\in S$, each $\e$ sufficiently small and each $t\in (\ell_-(\e),\ell_+(\e))$ we define a map $\phi_p$ by requiring
\begin{equation}\label{eq:r1}
	\mathfrak{m}_p(\ell_+(\e))-\mathfrak{m}_p(t)\,=\, \mathfrak{m}_p(L_+(\e,p))-\mathfrak{m}_p(\phi_p(t)), \quad 	\textrm{for $t\in (r_{p,\e},\ell_+(\e))$},
\end{equation}
and
\begin{equation}\label{eq:r2}
\mathfrak{m}_p(t)-\mathfrak{m}_p(\ell_-(\e))=\mathfrak{m}_p(\phi_p(t))-\mathfrak{m}_p(L_-(\e,p)), \quad \textrm{for $t\in (\ell_-(\e),r_{p,\e})$}.
\end{equation}
This yields
$$
	\phi_p(t)\,=\,
	\begin{cases}
		\mathfrak{t}_p\big(\mathfrak{m}_p(L_+(\e,p))-\mathfrak{m}_p(\ell_+(\e))+\mathfrak{m}_p(t)\big) &\text{ if }t\in (r_{p,\e},\ell_+(\e)),\\
		\mathfrak{t}_p\big(\mathfrak{m}_p(L_-(\e,p))-\mathfrak{m}_p(\ell_-(\e))+\mathfrak{m}_p(t)) &\text{ if }t\in (\ell_-(\e),r_{p,\e})
	\end{cases}
$$
and in particular
\begin{align}
	\phi_p(\mathfrak{t}_p(m))=\left\{\begin{array}{ll}\mathfrak{t}_p(\mathfrak{m}_p(L_+(\e,p))-\mathfrak{m}_p(\ell_+(\e))+m), &  \textrm{if $m\in (\mathfrak{m}_p(r_{p,\e}),\mathfrak{m}_p(\ell_+(\e)))$}\\
\\
	\mathfrak{t}_p(\mathfrak{m}_p(L_-(\e,p))-\mathfrak{m}_p(\ell_-(\e))+m), & \textrm{if $m\in (\mathfrak{m}_p(\ell_-(\e)),\mathfrak{m}_p(r_{p,\e}))$}.
\end{array}\right. \label{eq:phi-mass}
\end{align}

\subsection{Three-dimensional transport}
We then claim that
\begin{align*}
	\Phi: U_\eps\,\to\, V_\eps,\quad p+t\nu_p \,\mapsto\, p+\phi_p(t)\nu_p,
\end{align*}
is a transport map from $u_\eps$ to $v_\eps$. In fact, by applying a transformation formula as in \eqref{eq:trafo2} we deduce that for any $g\in L^1(\R^3)$
\begin{eqnarray*}
	\lefteqn{\int g(\Phi(x)) u_\eps(x)\,dx}&& \\
	&= &\int_S \int_{\mathfrak{m}_p(\ell_-(\e))}^{\mathfrak{m}_p(r_{p,\e})}g\bigl[ p+\phi_p(\mathfrak{t}_p(m))\nu_p\bigr]\,dm + \int_S \int_{\mathfrak{m}_p(r_{p,\e})}^{\mathfrak{m}_p(\ell_+(\e))}g\bigl[ p+\phi_p(\mathfrak{t}_p(m))\nu_p\bigr]\,dm\\
	&\stackrel{\eqref{eq:phi-mass}}= &\int_S \int_{\mathfrak{m}_p(\ell_-(\e))}^{\mathfrak{m}_p(r_{p,\e})}g\Bigl[ p+\mathfrak{t}_p\bigl(\mathfrak{m}_p(L_-(\e,p))-\mathfrak{m}_p(\ell_-(\e))+m\bigr)\nu_p\Bigr]\,dm \\
	&&
	\quad + \int_S \int_{\mathfrak{m}_p(r_{p,\e})}^{\mathfrak{m}_p(\ell_+(\e))}g\Bigl[ p+\mathfrak{t}_p\bigl(\mathfrak{m}_p(L_+(\e,p))-\mathfrak{m}_p(\ell_+(\e))+m\bigr)\nu_p\Bigr]\,dm\\
	&\stackrel{(\ref{eq:rp}-\ref{eq:rp1})}= &\int_S \int_{\mathfrak{m}_p(L_-(\e))}^{\mathfrak{m}_p(\ell_-(\e))} g( p+\mathfrak{t}_p(m)\nu_p)\,dm + \int_S \int_{\mathfrak{m}_p(\ell_+(\e))}^{\mathfrak{m}_p(L_+(\e))}g( p+\mathfrak{t}_p(m)\nu_p)\,dm\\
	&=& \int g(x) v_\eps(x)\,dx.
\end{eqnarray*}
By the definition of $d_1$ this implies the estimate
$$
\begin{aligned}
d_1(u_\e,v_\e)&\le\int_S\int_{\mathfrak{m}_p(r_{p,\e})}^{\mathfrak{m}_p(\ell_+(\e))}\bigl[\phi_p(\mathfrak{t}_p(m))-\mathfrak{t}_p(m)\bigr]\,dm\,d\mathcal H^2(p)\\
&\qquad +\int_S\int_{\mathfrak{m}_p(\ell_-(\e))}^{\mathfrak{m}_p(r_{p,\e})}\bigl[\mathfrak{t}_p(m)-\phi_p(\mathfrak{t}_p(m))\bigr]\,dm\,d\mathcal H^2(p).
\end{aligned}
$$
In particular, from \eqref{eq:phi-mass} we deduce that
\begin{equation}\label{eq:d1}
\begin{aligned}
d_1(u_\e,v_\e)&\le\int_S\bigg(\int_{\mathfrak{m}_p(r_{p,\e})}^{\mathfrak{m}_p(\ell_+(\e))}\bigl[\mathfrak{t}_p\bigl(\mathfrak{m}_p(L_+(\e,p))-\mathfrak{m}_p(\ell_+(\e))+m\bigr)-\mathfrak{t}_p(m)\bigr]\,dm\\
&+\int_{\mathfrak{m}_p(\ell_-(\e))}^{\mathfrak{m}_p(r_{p,\e})}\bigl[\mathfrak{t}_p(m)-\mathfrak{t}_p\bigl(\mathfrak{m}_p(L_-(\e,p))-\mathfrak{m}_p(\ell_-(\e))+m\bigr)\bigr]\,dm\bigg)d\mathcal H^2(p).
\end{aligned}
\end{equation}
For the sake of simplicity we use the abbrevations
$$
\delta_+:=\mathfrak{m}_p(L_+(\e,p))-\mathfrak{m}_p(\ell_+(\e)), \quad \delta_-:=\mathfrak{m}_p(L_-(\e,p))-\mathfrak{m}_p(\ell_-(\e))
$$
By \eqref{eq:dev-t-0} we obtain that $\mathfrak{t}_p$ expands as 
$$
\mathfrak{t}_p(m)=\e m-\frac{\e^2}{2}H(p)m^2+\frac{\e^3}{6}(3H^2(p)-2K(p))m^3+o(\e^3),
$$
and a straightforward computation yields that
\begin{equation}\label{eq:int1}
\begin{aligned}
\mathfrak{t}_p(\delta_++m)-\mathfrak{t}_p(m)&=\e\delta_+-\frac{\e^2}{2}H(p)\delta_+^2-\e^2H(p)\delta_+m+\frac{\e^3}{6}(3H^2(p)-2K(p))\delta_+^3\\
&+\frac{\e^3}{2}(3H^2(p)-2K(p))\delta_+^2m+\frac{\e^3}{2}(3H^2(p)-2K(p))\delta_+m^2+o(\e^3)
\end{aligned}
\end{equation}
and
\begin{equation}\label{eq:int2}
\begin{aligned}
\mathfrak{t}_p(m)-\mathfrak{t}_p(\delta_-+m)&=-\e\delta_-+\frac{\e^2}{2}H(p)\delta_-^2+\e^2H(p)\delta_-m-\frac{\e^3}{6}(3H^2(p)-2K(p))\delta_-^3\\
&-\frac{\e^3}{2}(3H^2(p)-2K(p))\delta_-^2m-\frac{\e^3}{2}(3H^2(p)-2K(p))\delta_-m^2+o(\e^3).
\end{aligned}
\end{equation}
Putting \eqref{eq:int1} and \eqref{eq:int2} in \eqref{eq:d1} and integrating in $dm$ we easily obtain
\begin{equation}
\begin{aligned}\label{eq:d1final}
d_1(u_\e,v_\e)&\le \e \int_S\Bigl[\bigl(\mathfrak{m}_p(L_+(\e,p))-\mathfrak{m}_p(\ell_+(\e))\bigr)^2+\bigl(\mathfrak{m}_p(L_-(\e,p))-\mathfrak{m}_p(\ell_-(\e))\bigr)^2\Bigr]\,d\mathcal H^2(p)\\
&-\e^2\int_S\Bigl[\mathfrak{m}_p(\ell_+(\e))\bigl(\mathfrak{m}_p(L_+(\e,p))-\mathfrak{m}_p(\ell_+(\e))\bigr)^2\\
&\qquad \qquad +\mathfrak{m}_p(\ell_-(\e))\bigl(\mathfrak{m}_p(L_-(\e,p))-\mathfrak{m}_p(\ell_-(\e))\bigr)^2)\Bigr]H(p)\,d\mathcal H^2(p)\\
&+\frac{\e^3}{12}\int_S\Bigl[(\mathfrak{m}_p(\ell_+(\e))-\mathfrak{m}_p(r_{p,\e}))^2\bigl(7\mathfrak{m}_p(\ell_+(\e))^2+\mathfrak{m}_p(r_{p,\e})^2-2\mathfrak{m}_p(\ell_+(\e))\mathfrak{m}_p(r_{p,\e})\bigr)\\
&\qquad  +(\mathfrak{m}_p(r_{p,\e})-\mathfrak{m}_p(\ell_-(\e)))^2\bigl(7\mathfrak{m}_p(\ell_-(\e))^2+\mathfrak{m}_p(r_{p,\e})^2-2\mathfrak{m}_p(\ell_-(\e))\mathfrak{m}_p(r_{p,\e})\bigr)\Bigr]\\
&\qquad \qquad \qquad (3H^2(p)-2K(p))\,d\mathcal H^2(p)+o(\e^3).
\end{aligned}
\end{equation}

\subsection{Finalizing the choices}
With these preparations one now can estimate the values of $\Gfun_\e(u_\eps,v_\eps)$ and optimize this estimate with respect to the choice of $\ell_\pm(\eps)$. For simplicity we do it here the other way around and start with a suitable choice:  For any $\e>0$ we let 
$$
	\ell_+(\e):=\e+a(\e)\e^3, \quad \ell_-(\e):=-\ell_+(\e)=-\e-a(\e)\e^3,
$$
where $\e \mapsto a(\e)$ is the unique smooth map, defined for $\e$ sufficiently small, such that 
\begin{gather}\label{implicitell}
	a(\e)+\frac{2}{3} \int_SK\,d\mathcal H^2+\bigg(2a(\e)\e^2+2a^2(\e)\e^4+\frac{2\e^6}{3}a^3(\e)\bigg)\int_SK\,d\mathcal H^2 \,=\, 0, \\
	a(0)=-\frac{2}{3}\int_SK\,d\mathcal H^2,
\end{gather}
where we remark that \eqref{implicitell} is equivalent to the total mass condition \eqref{cond1}.
%Thus 
%$$
%a(\e)=-\frac{2}{3}\int_SK\,d\mathcal H^2+o(\e)
%$$
%so that 
%$$
%\ell_\pm(\e)=\pm\e\mp\frac{2\e^3}{3}\int_SK\,d\mathcal H^2+o(\e^4).
%$$
%We easily obtain, from the implicit relation in \eqref{implicitell} and using the very definition of $\mathfrak{m}_p$, 
%$$
%\int_S(\mathfrak{m}_p(\ell_+(\e))-\mathfrak{m}_p(\ell_-(\e)))\,d\mathcal H^2(p)=1
%$$
%which is \eqref{cond1}. Moreover, since it holds
Since
$$
\ell_\pm(\e)=\pm\e\mp\frac{2\e^3}{3}\int_SK\,d\mathcal H^2+o(\e^4),
$$
the expression~\eqref{eq:Ju} for the surface area becomes
\begin{equation}\label{eq:Jufinal}
\mathcal H^2(J_{u_\e})=1+2\e^2\int_SK\,d\mathcal H^2+o(\e^2).
\end{equation}
Next, we pass to the estimate of $d_1(u_\e,v_\e)$. First of all note that
$$
\mathfrak{m}_p(\ell_+(\e))=1+\frac{\e}{2}H(p)+\e^2\bigg(a(\e)+\frac{1}{3}K(p)\bigg)+o(\e^2),
$$
while
$$
\mathfrak{m}_p(\ell_-(\e))=-1+\frac{\e}{2}H(p)-\e^2\bigg(a(\e)+\frac{1}{3}K(p)\bigg)+o(\e^2).
$$
Now, fix $p\in S$ and consider the unique smooth map $\e \mapsto L_+(\e,p)$, such that
$$
\mathfrak{m}_p(L_+(\e,p))=2+\frac{3\e}{2}H(p), \quad L_+(0,p)=0.
$$
Therefore, we are able to find a unique smooth map $\e \mapsto L_-(\e,p)$ such that
$$
\mathfrak{m}_p(L_+(\e,p))-\mathfrak{m}_p(L_-(\e,p))=2(\mathfrak{m}_p(\ell_+(\e))-\mathfrak{m}_p(\ell_-(\e)))
$$
which is \eqref{cond2}, and from which we get 
$$
\mathfrak{m}_p(L_-(\e,p))=-2+\frac{3\e}{2}H(p)-4\e^2\bigg(a(\e)+\frac{1}{3}K(p)\bigg)+o(\e^2).
$$
Finally, we have, recalling \eqref{eq:rp},  
$$
\mathfrak{m}_p(r_{p,\e})=-\frac{\e}{2}H(p)+2\e^2\bigg(a(\e)+\frac{1}{3}K(p)\bigg)+o(\e^2).
$$
%Using now the very definition of $\delta_+$ and $\delta_-$ we obtain 
%$$
%\delta_+=1+\e H(p)-\e^2\bigg(a(\e)+\frac{1}{3}K(p)\bigg)+o(\e^2), 
%$$
%and
%$$
%\delta_-=-1+\e H(p)-\e^2(3a(\e)+K(p))+o(\e^2).
%$$
%Thus, by \eqref{eq:int1}, for each $m\in (\mathfrak{m}_p(r_{p,\e}),\ell_+(\e))$ we obtain
%$$
%\mathfrak{t}_p(\delta_++m)-\mathfrak{t}_p(m)=
%$$
Putting all together in \eqref{eq:d1final} we deduce that
$$
\begin{aligned}
d_1&(u_\e,v_\e)\\
&\le \e+2\e^3\int_S\bigg(H^2+2a(\e)+\frac{2}{3}K\bigg)\,d\mathcal H^2-5\e^3\int_SH^2\,d\mathcal H^2+\frac{7\e^3}{6}\int_S(3H^2-2K)\,d\mathcal H^2+o(\e^3)\\
&\qquad =\e+\e^3\int_S\bigg(\frac{1}{2}H^2-\frac{7}{3}K\bigg)\,d\mathcal H^2+o(\e^3).
\end{aligned}
$$
Finally, together with \eqref{eq:Jufinal} we get
$$
\mathcal H^2(J_{u_\e})+\frac{1}{\e}d_1(u_\e,v_\e) \le 2+2\e^2\int_S \bigg(\frac{1}{4}H^2-\frac{1}{6}K\bigg)\,d\mathcal H^2+o(\e^2)
$$
which gives \eqref{eq:limsup}, by the definition of $\mathcal G_\e(u_\e,v_\e)$.
\\
\par 
It remains to prove that $u_\e\mathcal L^3 \stackrel*\rightharpoonup2\mathcal H^2\lfloor S$ as measures. For any $\varphi\in C^0_c(\R^3)$ we have, using \eqref{eq:trafo}, 
$$
\int\varphi(x) u_\e(x)\,dx=\frac{1}{\e}\int_{U_\e}\varphi(x)\,dx=\frac{1}{\e}\int_S\int_{\ell_-(\e)}^{\ell_+(\e)}\varphi(p+t\nu(p))(1+tH(p)+t^2K(p))\,d\mathcal H^2(p)
$$
from which it follows, since $\ell_\pm(\e)=\pm \e+o(\e)$, that
$$
\lim_{\e\to 0}\int\varphi(x) u_\e(x)\,dx=2\int_S\varphi(p)\,d\mathcal H^2(p)
$$
and this yields the conclusion.
%
%=======================================
%
% ==========================================
%\bibliographystyle{abbrv}\bibliography{all,eigene}%,all

\begin{thebibliography}{10}

\bibitem{AmGS05}
L.~Ambrosio, N.~Gigli, and G.~Savar{\'e}.
\newblock {\em Gradient flows in metric spaces and in the space of probability
  measures}.
\newblock Lectures in Mathematics ETH Z\"urich. Birkh\"auser Verlag, Basel,
  2005.

\bibitem{CaFM02}
L.~A. Caffarelli, M.~Feldman, and R.~J. McCann.
\newblock {\em Constructing optimal maps for {M}onge's transport problem as a limit
  of strictly convex costs}.
\newblock J. Amer. Math. Soc., 15(1):1--26 (electronic), 2002.

\bibitem{FeMc02}
M.~Feldman and R.~J. McCann.
\newblock {\em Uniqueness and transport density in {M}onge's mass transportation
  problem}.
\newblock Calc. Var. Partial Differential Equations, 15(1):81--113, 2002.

\bibitem{Hutc86}
J.~E. Hutchinson.
\newblock {\em Second fundamental form for varifolds and the existence of surfaces
  minimising curvature}.
\newblock Indiana Univ. Math. J., 35:45--71, 1986.

\bibitem{KnMu13}
H.~Kn{\"u}pfer and C.~B. Muratov.
\newblock {\em On an isoperimetric problem with a competing nonlocal term i: The
  planar case}.
\newblock Communications on Pure and Applied Mathematics, 2013.

\bibitem{Merl13}
B.~Merlet.
\newblock {\em A highly anisotropic nonlinear elasticity model for vesicles I.
  Eulerian formulation, rigidity estimates and vanishing energy limit}.
\newblock 2013.

\bibitem{Merl13a}
B.~Merlet.
\newblock {\em A highly anisotropic nonlinear elasticity model for vesicles II.
  Derivation of the thin bilayer bending theory}.
\newblock 2013.

\bibitem{PaTr13}
O.~Pantz and K.~Trabelsi.
\newblock {\em A new non linear shell modeling combining flexural and membrane
  effects}.
\newblock Research report, 2013.

\bibitem{PeRoe09}
M.~A. Peletier and M.~Röger.
\newblock {\em Partial localization, lipid bilayers, and the elastica functional}.
\newblock Arch. Ration. Mech. Anal., 193(3):475--537, 2009.

\bibitem{SeFr14}
B.~Seguin and E.~Fried.
\newblock {\em Microphysical derivation of the Canham–Helfrich free-energy
  density}.
\newblock Journal of Mathematical Biology, 68(3):647--665, 2014.

\end{thebibliography}
\def\cprime{$'$}

\end{document}